\newtheorem{theorem}{Theorem}[section]
\newtheorem{lemma}[theorem]{Lemma}
\newtheorem{proposition}[theorem]{Proposition}
\newtheorem{corollary}[theorem]{Corollary}
\newtheorem{definition}[theorem]{Definition}
\theoremstyle{definition}
\newtheorem{example}[theorem]{Example}
\newtheorem{remark}[theorem]{Remark}
\newcommand{\trinorm}{\vert\hspace{-0.5mm}\vert\hspace{-0.5mm}\vert}
\begin{document}
	\title[]{The finest locally convex topology of an extended locally convex space}
	\author{Akshay Kumar \and Varun Jindal}
	
	
	\address{Akshay Kumar: Department of Mathematics, Malaviya National Institute of Technology Jaipur, Jaipur-302017, Rajasthan, India}
	\email{akshayjkm01@gmail.com}
	
	\address{Varun Jindal: Department of Mathematics, Malaviya National Institute of Technology Jaipur, Jaipur-302017, Rajasthan, India}
	\email{vjindal.maths@mnit.ac.in}
	

	\subjclass[2010]{Primary 46A03, 46A20; Secondary 46A17, 46B20}	
\keywords{Locally convex spaces, extended norms, extended seminorms, extended locally convex spaces, finest locally convex topology, bornologies}		
	\begin{abstract}
	Salas and  Garc{\'\i}a introduced the concept of an extended locally convex space in \cite{esaetvs} which extends the idea of an extended normed space (introduced by Beer in \cite{nwiv}). This article gives an attractive formulation of the finest locally convex topology of an extended locally convex space and provides a systematic study of the resulting locally convex space. As an application, we characterize the coincidence of the finest locally convex topologies corresponding to the topologies of uniform and strong uniform convergences on a bornology for the function space $C(X)$.  
	\end{abstract}

	

\maketitle

	\section{Introduction}
	For a vector space $X$ over a field $\mathbb{K}$ ($\mathbb{R}$ or $\mathbb{C}$), an extended norm on $X$ is a function satisfying all the properties of a norm and, in addition, can also attain infinite value. A vector space $X$ together with an extended norm is called an extended normed linear space. These spaces were first studied by Beer in \cite{nwiv} and further developed by Beer and Vanderwerff (\cite{socsiens, spoens}).  Of course, an extended norm on a vector space $X$ induces an extended metric on $X$ as follows $d(x,y) = \parallel x-y\parallel$. We refer readers to \cite{tsoervms, lmrvcf} for more details on extended metric spaces.   
	 
	 
Motivated from the work of Beer and Vanderwerff, in \cite{esaetvs}, Salas and Garc{\'\i}a introduced the concept of an extended locally convex space. They proved that every extended locally convex topology on a vector space $X$ is induced by a collection of extended seminorms on $X$. An extended seminorm on a vector space $X$ is a function that satisfies all the properties of a seminorm and can also attain infinite value. So every extended normed space is an extended locally convex space. An extended locally convex space fails to be topological vector space as the scalar multiplication need not be jointly continuous. Therefore the techniques of classical functional analysis may not apply to these new structures.  
	 
	 
In \cite{spoens}, Beer and Vanderwerff constructed the finest locally convex topology (denoted by $\tau_{P_X}$) for an extended normed linear space $(X,\parallel\cdot\parallel)$ which is coarser than the extended norm topology. It is shown that the locally convex space $(X,\tau_{P_X})$ has the same dual as that of $(X,\parallel\cdot\parallel)$. This could be an interesting approach to apply the classical locally convex theory to study these extended structures.    
	 
In this paper, we construct the finest  locally convex topology (denoted by $\tau_F$) for an extended locally convex space $(X,\tau)$ which is coarser than the extended locally convex topology $\tau$. In this setting also, we show that the dual of the finest locally convex space $(X,\tau_F)$ is the same as that of $(X,\tau)$. As an application to function spaces, we show that for a bornology $\mathcal{B}$ on a metric space $(X,d)$, the finest locally convex topologies for the spaces $(C(X),\tau_{\mathcal{B}})$ and $(C(X),\tau_{\mathcal{B}}^{s})$ coincide if and only if $\mathcal{B}$ is shielded from closed sets, where   $\tau_{\mathcal{B}}$, $\tau_{\mathcal{B}}^{s}$ denote respectively, the classical topology of uniform convergence on $\mathcal{B}$ and the topology of strong uniform convergence on $\mathcal{B}$ (introduced by Beer and Levi in \cite{Suc}).  This result complements Theorem 4.1 of \cite{Ucucas}.

In order to provide a systematic study of the topology $\tau_F$ for an extended locally convex space $(X,\tau)$, we further investigate:

\begin{itemize}
	\item  separation of a point from a closed convex set in $(X,\tau)$;
	\item  the finest locally convex topology for a subspace, quotient space, and product of extended locally convex spaces;
	\item extended normability of $(X,\tau)$;
	\item metrizability and normability of $(X,\tau_F)$.   
\end{itemize} 

%

\section{Preliminaries}

Throughout this paper, the underlying field of every vector space is either $\mathbb{R}$ or $\mathbb{C}$. For a vector space $X$, an \textit{extended norm} $\parallel\cdot\parallel:X\rightarrow [0,\infty]$ is a function  which satisfies  
\begin{itemize}
	\item[(1)] $\parallel x\parallel=0$ if and only if $x=0_X$;
	\item[(2)] $\parallel \alpha x\parallel = |\alpha |\parallel x\parallel$ for each scalar  $\alpha$ and $x\in X$;
	\item[(3)] $\parallel x+y\parallel~\leq~ \parallel x\parallel+\parallel y\parallel$ for each $x,y\in X$.
\end{itemize}
A vector space $X$ together with an extended norm $\parallel \cdot\parallel$ is called an \textit{extended normed linear space} (enls, for short), and is denoted by $(X,\parallel \cdot\parallel)$. The \textit{finite subspace} $X_{fin}$ for an enls $(X,\parallel\cdot\parallel)$ is defined as $X_{fin}=\{x\in  X : \parallel x\parallel<\infty\}$. The subspace $X_{fin}$ of an enls $X$ is the smallest open subspace of $X$ (see Corollary 3.9 of \cite{nwiv}). An enls $(X,\parallel\cdot\parallel)$ is called an extended Banach space if every Cauchy sequence in $(X,\parallel\cdot\parallel)$ converges to a point of $X$. In Proposition 3.11 of \cite{nwiv}, it is shown that an enls $(X,\parallel\cdot\parallel)$ is an extended Banach space if and only if $(X_{fin}, \parallel\cdot\parallel)$ is a Banach space. For further details on enls, we refer to \cite{nwiv} and \cite{spoens}.  

A natural generalization of an extended norm is  an extended seminorm.	
\begin{definition}{\normalfont (\cite{esaetvs})} \normalfont	A function $\rho:X\rightarrow [0, \infty]$ on a vector space $X$ is said to be an \textit{extended seminorm} if it satisfies
	\begin{itemize}
		\item[(1)] $\rho(\alpha x)=|\alpha|\rho(x)$ for each scalar $\alpha$ and $x\in X$; 
		\item[(2)] $\rho(x+y)\leq \rho(x)+\rho(y)$ for $x,y\in X$.
\end{itemize}\end{definition} 
Suppose $X$ is a vector space  and $\tau$ is a group topology on $X$. We say that $(X,\tau)$ is an \textit{extended seminormed space} (esns, for short) if there exists a family $\mathcal{P}=\{\rho_i : i\in I \}$ of extended seminorms on $X$ such that the induced topology $\tau(\mathcal{P})$ (smallest topology under which each $\rho_i$ is continuous) on $X$ coincides with $\tau$. 

%

Extended locally convex spaces were introduced in \cite{esaetvs}. In order to define extended locally convex space, first we need to define fundamental extended locally convex space.	
\begin{definition}{\normalfont (\cite{esaetvs})}
	\normalfont		A  vector  space $X$ together with a group topology $\tau$ is said to be a \textit{fundamental extended locally convex space} $($fundamental elcs, for short$)$ if there exist two subspaces $M$ and $N$ such that 
	\begin{itemize}
		\item[(1)]  $X=M\oplus N$ and $X$ is topologically isomorphic to $(M, \tau|_M)\times (N, \tau|_N)$;
		\item[(2)] $(M, \tau|_M)$ is a locally convex space;
		\item[(3)] $(N,\tau|_N)$ is a discrete space.
	\end{itemize}
\end{definition}
\begin{definition}{\normalfont(\cite{esaetvs})}
	\normalfont	A vector space $X$ together with a group topology $\tau$ is said to be an \textit{extended locally convex space} (elcs, for short) if there is a collection of topologies (generating family) $\{\tau_\alpha : \alpha\in\Lambda\}$ on $X$ such that
	\begin{itemize}
		\item[(1)] for each $\alpha, (X, \tau_\alpha)$ is  a fundamental elcs;
		\item[(2)]  $\tau =\vee_{\alpha\in\Lambda}\tau_\alpha,$ that is, $\tau$ is the topology induced by the collection $\{\tau_\alpha:\alpha\in \Lambda\}.$
	\end{itemize}
\end{definition}
The \textit{ finite subspace} $X_{fin}$ 	for an  elcs  (or a fundamental elcs) $(X,\tau)$  is defined by $$X_{fin}=\bigcap \left\lbrace Y : Y ~\text{is an open subspace of}~ X\right\rbrace.$$
It is easy to see that $X_{fin}$ is the largest subspace of $X$ which is contained in every open subspace of $X$. The following facts for an elcs $(X,\tau)$ can be found in \cite{esaetvs}.
\begin{itemize}
	\item[(1)] The finite space $X_{fin}$ for $X$ is an open subspace if and only if $X$ is  a fundamental elcs (Corollary 3.11).
	\item[(2)] The subspace $(X_{fin}, \tau|_{X_{fin}})$ is a locally convex space, and it is the largest such subspace of $X$. 
	\item[(3)]  For every generating family $\{\tau_\alpha:\alpha\in\Lambda\}$ of $\tau$, we have  $$X_{fin}=\bigcap_{\alpha\in \Lambda} X_{fin}^\alpha, $$
\end{itemize} 
\noindent where $X_{fin}^\alpha$ is the finite subspace of $(X,\tau_\alpha)$ (Proposition 3.10).

In Theorem 4.3 of \cite{esaetvs}, it is shown that if $\tau$ is a topology on a vector space $X$, then $(X,\tau)$ is an esns if and only if it is an elcs. So every extended locally convex topology  is induced by a collection of extended seminorms. If $(X,\tau)$ is an elcs and the topology $\tau$ is induced by a collection $\{\rho_\alpha:\alpha\in \Lambda\}$ of extended seminorms on $X$, then $X_{fin}=\{x\in X:\rho_\alpha(x)<\infty~ \text{for all} ~\alpha\in \Lambda\}$. In fact, if $S(X,\tau)$ denotes the collection of all continuous extended seminorms on $(X,\tau)$, then $X_{fin} = \{x\in X:\rho(x)<\infty~ \text{for all} ~\rho\in S(X,\tau)\}$. By Proposition 4.7 of \cite{esaetvs} for a group topology $\tau$ on $X$, $(X,\tau)$ is an elcs if and only if there exists a neighborhood basis $\mathfrak{B}$ of $0_X$ consisting of absolutely convex (balanced and convex) sets. In the proof of Proposition 4.7 in \cite{esaetvs}, it is shown that if $\mathfrak{B}$ is a neighborhood base of $0_X$ in an elcs $(X,\tau)$ consisting of absolutely convex sets, then $\tau$ is induced by the collection $\{\rho_{V}:V\in \mathfrak{B}\}$ of Minkowski functionals.   

\begin{definition}\label{Minkowski funcitonal}		
	\normalfont	Suppose $(X, \tau)$ is an elcs and  $U$ is an absolutely convex subset of  $X$. Then the \textit{Minkowski functional} $\rho_U:X\rightarrow[0,\infty]$  for $U$  is defined as  $$\rho_U(x)=\inf\{\lambda>0: x\in \lambda U\}.$$
\end{definition}
\noindent Suppose $(X,\tau)$ is an elcs. Then the following facts are immediate from Definition \ref{Minkowski funcitonal}.
\begin{itemize}
	\item[(1)] Every Minkowski functional $\rho_{U}$ is an extended seminorm. In addition, if $U$ is absorbing, then $\rho_U$ is a seminorm on $X$.
	\item[(2)] $\rho_U$ is  continuous   if and only if   $U$ is a neighborhood of $0_X$. 
	\item[(3)]  If $U\subseteq V$, then $\rho_V\leq \rho_U.$ 
	\item[(4)]  In general, $\{x\in X:\rho_U(x)<1\}\subseteq U\subseteq \{x\in X:\rho_U(x)\leq 1\}.$ 
\end{itemize}  
We fix the following notation for the rest of this paper. Suppose $U$ is an absolutely convex subset of an elcs $X$. Then $X_{fin}^U$ is given by $\{x\in X: \rho_U(x)<\infty\}$. Similarly, if $\rho$ is a continuous extended seminorm on an elcs $X$, then $X_{fin}^\rho=\{x\in X: \rho(x)<\infty\}$. For any nonempty subset $A$ of a topological space $(X,\tau)$, we denote the closure and interior of $A$ in $(X,\tau)$ by $cl_\tau(A)$  and  $int_\tau(A)$, respectively. We refer to \cite{GTE, lcsosborne, tvsschaefer} for other terms and definitions.

	\section{finest locally convex topology of an elcs}
	The main aim of this section is to construct the finest locally convex topology coarser than the extended locally convex topology $\tau$ for an elcs $(X,\tau)$. We further show that the resulting locally convex space has the same dual as that of $(X,\tau)$. We also examine the separation of points and closed convex sets by a continuous linear functional.
	  


To formulate the finest locally convex topology for an elcs, we first give the following result,  a consequence of Proposition 4.7 proved in \cite{esaetvs}.
\begin{proposition}\label{base for elcs}
	Let $(X,\tau)$ be an elcs. Then there exists a neighborhood base $\mathfrak{B}$ of $0_X$ such that each element of $\mathfrak{B}$  is absolutely convex, and the collection $\{\rho_{V}:V\in \mathfrak{B}\}$ of Minkowski functionals generates $\tau$. 
\end{proposition}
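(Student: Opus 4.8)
The plan is to obtain both assertions from Proposition 4.7 of \cite{esaetvs}, which supplies the essential input. Since $(X,\tau)$ is an elcs, that proposition yields a neighborhood base $\mathfrak{B}$ of $0_X$ every member of which is absolutely convex; this disposes of the first assertion at once. The remaining task is to verify that the associated Minkowski functionals generate precisely $\tau$, that is, $\tau(\{\rho_V : V\in\mathfrak{B}\}) = \tau$, where $\tau(\{\rho_V : V\in\mathfrak{B}\})$ denotes the smallest topology making every $\rho_V$ continuous.

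For the inclusion $\tau(\{\rho_V : V\in\mathfrak{B}\}) \subseteq \tau$ I would invoke fact (2) recorded after Definition \ref{Minkowski funcitonal}: each $V\in\mathfrak{B}$ is a $\tau$-neighborhood of $0_X$, so every $\rho_V$ is $\tau$-continuous, and hence the coarsest topology rendering all the $\rho_V$ continuous is contained in $\tau$. For the reverse inclusion I would show that each basic $\tau$-neighborhood $V\in\mathfrak{B}$ contains a $\tau(\{\rho_V\})$-neighborhood of $0_X$: by fact (4) the set $\{x\in X : \rho_V(x) < 1\}$ lies inside $V$, contains $0_X$, and is $\tau(\{\rho_V\})$-open, being the preimage of the open set $[0,1)$ under the $\tau(\{\rho_V\})$-continuous map $\rho_V$. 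Since $\mathfrak{B}$ is a $\tau$-neighborhood base at the origin, this comparison of bases at $0_X$ forces $\tau \subseteq \tau(\{\rho_V\})$.

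The step requiring the most care, and the one I regard as the main obstacle, is the passage from agreement of neighborhood bases at $0_X$ to equality of the two topologies. This is legitimate only because both topologies are group topologies, so that each is recovered from its neighborhood filter at the origin by translation; for $\tau$ this is part of the hypothesis, while for $\tau(\{\rho_V\})$ it rests on the fact that the topology induced by a family of extended seminorms is a group topology, which is built into the esns framework and the equivalence of elcs and esns established in Theorem 4.3 of \cite{esaetvs}. Granting this, the two inclusions combine to give $\tau(\{\rho_V : V\in\mathfrak{B}\}) = \tau$, completing the argument; indeed this is exactly the conclusion extracted in the proof of Proposition 4.7 of \cite{esaetvs}, so one may alternatively cite that proof directly.
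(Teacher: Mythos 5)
Your proof is correct and takes essentially the same route as the paper: the paper states this proposition as a direct consequence of Proposition 4.7 of \cite{esaetvs} (whose proof, as the preliminaries record, already shows that the Minkowski functionals of an absolutely convex neighborhood base induce $\tau$), and you invoke exactly that result. The only difference is that you write out the verification the paper delegates to the citation---using facts (2) and (4) about $\rho_V$ for the two inclusions, and correctly noting that passing from agreement of the neighborhood filters at $0_X$ to equality of the topologies requires both $\tau$ and the seminorm-induced topology to be group topologies---and this unpacking is sound.
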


	\begin{theorem}\label{construction}
		Suppose $(X,\tau)$ is an elcs. Then there exists a locally convex topology $\tau_{F}$  on $X$ with the following properties:
		\begin{itemize}
			\item[$(a)$] $\tau_{F} \subseteq \tau$;
			\item[$(b)$] if $\sigma $ is a locally convex topology on $X$ with $\sigma\subseteq \tau$, then $\sigma\subseteq \tau_{F}$.		
		\end{itemize} 
	\end{theorem}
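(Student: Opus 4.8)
The plan is to construct $\tau_F$ directly as the topology generated by the \emph{finite-valued} continuous seminorms and then verify the two universal properties $(a)$ and $(b)$. Concretely, let $\mathcal{S}$ denote the collection of all $\tau$-continuous seminorms $p:X\to[0,\infty)$ (note that these are genuine seminorms, not merely extended seminorms in $S(X,\tau)$), and define $\tau_F=\tau(\mathcal{S})$ to be the coarsest topology on $X$ for which every $p\in\mathcal{S}$ is continuous. Since $\mathcal{S}$ is a family of seminorms, the standard theory of locally convex spaces shows that $\tau_F$ is a locally convex vector topology, with a neighborhood base at $0_X$ given by the absolutely convex sets $\bigcap_{i=1}^{n}\{x\in X:p_i(x)<\varepsilon\}$ for $p_1,\dots,p_n\in\mathcal{S}$ and $\varepsilon>0$. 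No feature peculiar to the extended setting is needed for this step, and the family $\mathcal{S}$ is always nonempty (it contains the zero seminorm), so $\tau_F$ is well defined.

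For property $(a)$, I would note that each $p\in\mathcal{S}$ is $\tau$-continuous by definition, so every basic $\tau_F$-neighborhood of $0_X$ displayed above is $\tau$-open. Because $\tau$ is a group topology and hence translation invariant, an arbitrary $\tau_F$-open set $U$ can be written as $U=\bigcup_{x\in U}(x+W_x)$ with each $W_x$ a basic $\tau_F$-neighborhood of $0_X$; each translate $x+W_x$ is then $\tau$-open, whence $U$ is $\tau$-open. This yields $\tau_F\subseteq\tau$.

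For property $(b)$, let $\sigma\subseteq\tau$ be any locally convex (vector) topology on $X$. Being locally convex, $\sigma$ is generated by a family $\{q_j:j\in J\}$ of finite-valued seminorms (the Minkowski functionals of an absolutely convex, absorbing neighborhood base of $0_X$ in $\sigma$), each of which is $\sigma$-continuous. Since $\sigma\subseteq\tau$, every $\sigma$-continuous map is $\tau$-continuous, so each $q_j$ lies in $\mathcal{S}$. Consequently $\sigma=\tau(\{q_j:j\in J\})\subseteq\tau(\mathcal{S})=\tau_F$, which is exactly the maximality required.

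The step I expect to be the crux is the passage in $(b)$ from ``$\sigma$ locally convex and coarser than $\tau$'' to ``$\sigma$ is generated by members of $\mathcal{S}$'': one must use that a locally convex topology is determined by its own continuous seminorms and then transfer continuity along the inclusion $\sigma\subseteq\tau$. Everything else is routine. An equivalent and perhaps more conceptual packaging, which I would mention, is to set $\tau_F=\bigvee\{\sigma:\sigma\text{ locally convex},\ \sigma\subseteq\tau\}$; since a join of locally convex topologies is generated by the union of the corresponding seminorm families it is again locally convex, and since each member is coarser than $\tau$ the join remains coarser than $\tau$, so $(a)$ and $(b)$ follow at once. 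The two descriptions agree because $\tau(\mathcal{S})$ is itself a member of this family and dominates every other member.
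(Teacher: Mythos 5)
Your proof is correct, but it takes a genuinely different route from the paper's. You define $\tau_F$ as the topology generated by the family $\mathcal{S}$ of \emph{all} finite-valued $\tau$-continuous seminorms (equivalently, as the join of all locally convex topologies coarser than $\tau$), after which $(a)$ follows from $\tau$-continuity of each $p\in\mathcal{S}$ plus translation invariance of the group topology $\tau$, and $(b)$ becomes almost tautological: the Minkowski functionals generating $\sigma$ are $\sigma$-continuous, hence $\tau$-continuous, hence lie in $\mathcal{S}$. The paper instead fixes an absolutely convex neighborhood base $\mathfrak{B}$ of $0_X$, and for each $V\in\mathfrak{B}$ takes the family $F_V$ of seminorms that induce the same topology as $\rho_V$ on the finite subspace $X_{fin}^V$; property $(b)$ is then proved by an explicit construction, choosing $V\subseteq U$, splitting $X=X_{fin}^V\oplus M_V$, and gluing $\rho_V$ on $X_{fin}^V$ with $\rho_U$ on $M_V$ to produce a seminorm $\rho'\in F_V$ whose open $\tfrac{1}{2}$-ball sits inside $U$. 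Your approach buys generality and economy: it uses no feature of the extended locally convex structure (it works verbatim for any group topology on a vector space, with no appeal to Minkowski functionals of a special base or to direct-sum decompositions), and it renders Theorem \ref{finest and original topology have same seminorm} and Corollary \ref{finest and original topology have same dual} immediate, since by construction every $\tau$-continuous seminorm is $\tau_F$-continuous. What the paper's construction buys is a concrete, smaller generating family $F$ of ``finitely compatible'' seminorms — the direct analogue of the finitely compatible norms of Beer and Vanderwerff — and this explicit family (and its gluing technique) is reused repeatedly later in the paper, e.g.\ in the proofs of Theorem \ref{finest and original topology have same seminorm} and the subspace, quotient, and product results; the two definitions necessarily yield the same topology, since any topology satisfying $(a)$ and $(b)$ is unique.
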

	\begin{proof}
		By Proposition \ref{base for elcs}, $\tau$ is induced by the collection $\{\rho_V  : V\in \mathfrak{B} \}$ of Minkowski functionals, where  $\mathfrak{B}$ is a neighborhood base of $0_X$ in $(X,\tau)$ consisting of absolutely convex sets.  For $V\in\mathfrak{B}$, define
		$F_V= \{\rho : \rho$ \text{is a seminorm on} $X$, \text{and both } $\rho_V$ \text{and} $\rho$ induce same topology on ${X_{fin}^V}\}$, where ${X_{fin}^V} = \{x\in X: \rho_V(x) <\infty\}$. Let $F= \displaystyle{\cup_{V\in\mathfrak{B}} F_V}$. Suppose $\tau_{F}$ is the topology on $X$ induced by the collection $F$ of seminorms on $X$. Then $(X, \tau_{F})$ is a locally convex space.
		
		To show $\tau_F \subseteq \tau$, consider any $\rho\in F$. Then $\rho\in F_V$ for some $V\in \mathcal{B}$. Since $\rho$ and $\rho_V$ induce the same topology on $X_{fin}^V$ and $\rho_V(x)=\infty$ for $x\notin X_{fin}^V$, there exists a positive scalar $C_\rho$ such that $\rho\leq C_\rho  \rho_{V}$ on $X$. Consequently, the topology $\tau(\rho)$ induced by the seminorm $\rho$  is coarser than the topology $\tau(\rho_V)$ induced by the extended seminorm $\rho_V$. Consequently, $\tau(\rho)$ is coarser than $\tau.$ Since this holds for each $\rho\in F$, we have $\tau_F\subseteq \tau$.    
		
	    Let $\sigma$ be a locally convex topology on $X$ with $\sigma\subseteq \tau$, and let $\mathcal{U}$ be a neighborhood base of $0_X$ in $(X,\sigma)$ such that each element of $\mathcal{U}$ is absolutely convex and absorbing. To show $\sigma \subseteq \tau_F$, it is enough to show that each  $U \in \mathcal{U}$ is a neighborhood of $0_X$ in $(X,\tau_{F})$. Consider any $U\in \mathcal{U}$. Since $\sigma\subseteq \tau$, there exists $V\in \mathfrak{B}$ such that $V\subseteq U$. Suppose  $M_{V}$ is a linear subspace of $X$ such that $X=X_{fin}^{V}\oplus M_{V}.$ Define $\rho':X\rightarrow[0,\infty)$ by
		$$\rho' (x) = \rho_{V}(x_f)+\rho_{U}(x_M),$$ 
		where $x=x_f+x_M$ for $x_f\in X_{fin}^{V}$, $x_M \in M_{V}$, and $\rho_{V}$, $\rho_{U}$ are Minkowski functionals for $V$ and $U$, respectively. Then $\rho'$ is a seminorm on $X$ and $\rho' =\rho_{V}$ on $X_{fin}^V$. So  $\rho'\in F_V\subseteq F$. Consequently, $\{x\in X: \rho'(x)<1/2\} \in \tau_F$.
		
		We show that $\{x\in X: \rho'(x)<1/2\}\subseteq U$. Suppose  $\rho'(x=x_f+x_M)<\frac{1}{2}$. Then $\rho_{V}(x_f)<\frac{1}{2}$ and $\rho_{U}(x_M)<\frac{1}{2}$, which implies that $x_f\in \frac{V}{2}, x_M\in \frac{U}{2} $. Thus 
		\begin{align*}
		x=x_f+x_M\in \left( \frac{V}{2}+\frac{U}{2}\right)\subseteq \left( \frac{U}{2}+\frac{U}{2}\right)=U.	
		\end{align*} \end{proof}

	\begin{definition}
		\normalfont	The locally convex topology $\tau_{F}$ described in \textnormal{Theorem \ref{construction}} for an elcs  $(X,\tau)$ is called the \textit{finest locally convex topology coarser than the given topology $\tau$} (flc topology, for short).
	\end{definition}

Since every extended normed linear space $X$ is an elcs, the finest locally convex topology $\tau_{P_X}$ described in Theorem 2.7 of \cite{spoens} is equal to  $\tau_F$.

\begin{lemma}\label{condition for a cotinuous linear functional}
		Let $(X,\tau)$ be an elcs and let $\mathfrak{B}$ be a neighborhood base of $0_X$ consisting of absolutely convex sets. Then for every continuous seminorm $\rho$ on $X$, there exists a $W\in\mathfrak{B}$ such that $\rho\leq  \rho_W$. 
	\end{lemma}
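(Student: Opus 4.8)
The plan is to sandwich a basic neighborhood inside the open unit ball of $\rho$ and then read off the desired inequality directly from the monotonicity of Minkowski functionals recorded after Definition \ref{Minkowski funcitonal}.

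First I would observe that, since $\rho$ is a continuous seminorm on $(X,\tau)$, the set $U_\rho = \{x\in X : \rho(x) < 1\}$ is $\tau$-open and contains $0_X$, hence is a neighborhood of $0_X$. Moreover $U_\rho$ is absolutely convex and absorbing (because $\rho$ is a finite-valued seminorm), so its Minkowski functional $\rho_{U_\rho}$ is well defined, and a short computation using the positive homogeneity of $\rho$ gives $\rho_{U_\rho}(x) = \inf\{\lambda > 0 : \rho(x) < \lambda\} = \rho(x)$ for every $x\in X$; that is, $\rho_{U_\rho} = \rho$.

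Next, because $\mathfrak{B}$ is a neighborhood base of $0_X$, the open set $U_\rho$ contains some $W\in\mathfrak{B}$. Now I would invoke fact $(3)$ following Definition \ref{Minkowski funcitonal}, namely that $U\subseteq V$ forces $\rho_V\leq \rho_U$: applied to $W\subseteq U_\rho$ this yields $\rho_{U_\rho}\leq \rho_W$, i.e. $\rho\leq \rho_W$, which is exactly the claim.

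I do not anticipate a genuine obstacle; the lemma is essentially a repackaging of continuity plus the homogeneity of seminorms. The only step that wants a word of care is the identity $\rho_{U_\rho}=\rho$, and if one prefers to sidestep it the same conclusion follows by a direct estimate: for $x$ with $\rho_W(x)<\infty$ and any $\lambda>\rho_W(x)$, absolute convexity of $W$ gives $x/\lambda\in W\subseteq U_\rho$, whence $\rho(x)=\lambda\,\rho(x/\lambda)<\lambda$; letting $\lambda\downarrow\rho_W(x)$ gives $\rho(x)\le\rho_W(x)$, the case $\rho_W(x)=\infty$ being trivial.
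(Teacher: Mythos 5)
Your proposal is correct and is essentially the paper's argument: both hinge on choosing $W\in\mathfrak{B}$ with $W\subseteq\{x\in X:\rho(x)<1\}$, and your closing ``direct estimate'' (scaling $x/\lambda\in W$ for $\lambda>\rho_W(x)$, hence $\rho(x)<\lambda$) is precisely the computation the paper performs. Your main route merely repackages that computation through the identity $\rho_{U_\rho}=\rho$ and the monotonicity of Minkowski functionals, which is a harmless cosmetic difference.
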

	\begin{proof}
		Suppose $\rho$ is a continuous seminorm on $X$.  Then there exists a $W\in\mathfrak{B}$ such that $W\subseteq \rho^{-1}\left([0,1)\right)$. Note that if $x\in X$ and $\rho_W(x)<\alpha$, then $\frac{1}{\alpha}x\in W$. Consequently, $\rho(x)<\alpha$. Thus $\rho\leq \rho_W$.\end{proof} 
		
	
	\begin{theorem} \label{finest and original topology have same seminorm}
		Let $(X,\tau)$ be an elcs and let $\tau_F$ be its  flc topology. A seminorm on  $X$ is continuous with respect to $\tau$ if and only if it is continuous with respect to $\tau_{F}$.
	\end{theorem}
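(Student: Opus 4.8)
The statement splits into two implications, one of which is immediate. Since Theorem \ref{construction}$(a)$ gives $\tau_F\subseteq\tau$, every $\tau_F$-open set is $\tau$-open, so any seminorm continuous for $\tau_F$ is automatically continuous for $\tau$. The whole content of the statement therefore lies in the reverse direction: a $\tau$-continuous seminorm must be shown to be $\tau_F$-continuous. For this I would recall that a seminorm is continuous for a locally convex topology precisely when the set on which it is strictly less than $1$ is a neighborhood of $0_X$; thus it suffices, given a $\tau$-continuous seminorm $\rho$, to exhibit a $\tau_F$-neighborhood of $0_X$ contained in $\{x\in X:\rho(x)<1\}$.

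My plan is to reuse the gluing construction from the proof of Theorem \ref{construction}. First I would apply Lemma \ref{condition for a cotinuous linear functional} to the $\tau$-continuous seminorm $\rho$ to obtain $W\in\mathfrak{B}$ with $\rho\le\rho_W$ on $X$. Next, choosing a subspace $M_W$ with $X=X_{fin}^W\oplus M_W$ and writing each $x$ uniquely as $x=x_f+x_M$ with $x_f\in X_{fin}^W$ and $x_M\in M_W$, I would define the finite-valued seminorm $\rho'(x)=\rho_W(x_f)+\rho(x_M)$. Since $\rho$ takes only finite values, $\rho'$ is a genuine seminorm on $X$, and because the decomposition forces $x_M=0$ whenever $x\in X_{fin}^W$, we have $\rho'=\rho_W$ on $X_{fin}^W$. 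Hence $\rho_W$ and $\rho'$ induce the same topology on $X_{fin}^W$, so $\rho'\in F_W\subseteq F$ and $\{x\in X:\rho'(x)<1\}$ is a $\tau_F$-neighborhood of $0_X$.

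Finally I would compare $\rho$ with $\rho'$. Using subadditivity of $\rho$ together with $\rho\le\rho_W$, one gets $\rho(x)\le\rho(x_f)+\rho(x_M)\le\rho_W(x_f)+\rho(x_M)=\rho'(x)$ for every $x\in X$, so $\{x:\rho'(x)<1\}\subseteq\{x:\rho(x)<1\}$; the latter set is therefore a $\tau_F$-neighborhood of $0_X$, and $\rho$ is $\tau_F$-continuous. The only delicate point—the step I expect to require the most care—is verifying that the auxiliary seminorm $\rho'$ genuinely belongs to the generating family $F$; this is exactly where the identity $\rho'=\rho_W$ on $X_{fin}^W$ is needed, and it is the reason the construction of $\tau_F$ is arranged through the subspace decomposition in Theorem \ref{construction}.
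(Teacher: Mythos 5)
Your proof is correct and follows essentially the same route as the paper's: both use Lemma \ref{condition for a cotinuous linear functional} to get $\rho\le\rho_W$, form the decomposition $X=X_{fin}^W\oplus M_W$, define the auxiliary seminorm $\rho'(x)=\rho_W(x_f)+\rho(x_M)$, observe $\rho'=\rho_W$ on $X_{fin}^W$ so that $\rho'\in F_W\subseteq F$, and conclude from $\rho\le\rho'$. In fact you spell out the subadditivity estimate $\rho(x)\le\rho(x_f)+\rho(x_M)\le\rho'(x)$ that the paper leaves implicit in its remark ``Note that $\rho\leq\rho'$.''
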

	\begin{proof}
		 As $\tau_F\subseteq \tau$, we only need to show that a continuous seminorm on $(X,\tau)$ is continuous with respect to $\tau_F$. Let $\mathfrak{B}$ be a neighborhood base of $0_X$ for $\tau$ such that members of $\mathfrak{B}$ are absolutely convex. Suppose $\rho:X\rightarrow [0,\infty) $ is a continuous seminorm on $(X, \tau)$. By Lemma \ref{condition for a cotinuous linear functional}, there exists a $W\in \mathfrak{B}$ such that $\rho\leq  \rho_W$. Let $X=X_{fin}^{W}\oplus M$. Define a seminorm $\rho':X\rightarrow[0,\infty)$  by  $$\rho'(x) = \rho_W(x_f)+\rho(x_M),$$ where $x=x_f+x_M$ for $x_f\in X_{fin}^{W}$ and $x_M\in M$. Since $\rho'=\rho_W$ on $X_{fin}^W$, $\rho'\in F_W\subseteq F$, where $F_W$ and $F$ are as defined in the proof of Theorem \ref{construction}.  Note that $\rho\leq \rho'$. Therefore $\rho$ is continuous with respect to $\tau_F$.
	\end{proof}
	\begin{corollary}\label{finest and original topology have same dual}
	Let $(X,\tau)$ be an elcs and let $\tau_F$ be its  flc topology. Then both $(X,\tau)$ and $(X,\tau_{F})$ have the same dual.	
	\end{corollary}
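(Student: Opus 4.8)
The goal is to prove Corollary~\ref{finest and original topology have same dual}: that $(X,\tau)$ and $(X,\tau_F)$ have the same dual. The plan is to reduce the statement about continuous linear functionals to the preceding Theorem~\ref{finest and original topology have same seminorm} about continuous seminorms. Since $\tau_F \subseteq \tau$ by Theorem~\ref{construction}(a), every $\tau_F$-continuous linear functional is automatically $\tau$-continuous, so one inclusion of duals is immediate. The substance lies in the reverse inclusion: showing that a $\tau$-continuous linear functional is in fact $\tau_F$-continuous.

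The key idea is the standard correspondence between continuous linear functionals and continuous seminorms on a topological vector space. First I would take an arbitrary $f \in (X,\tau)'$ and associate to it the function $\rho_f(x) = |f(x)|$. I would check that $\rho_f$ is a seminorm on $X$ (absolute homogeneity and the triangle inequality follow directly from linearity of $f$ and properties of the absolute value) and that $\rho_f$ takes only finite values, since $f$ is a genuine (finite-valued) linear functional into $\mathbb{K}$. Because $f$ is $\tau$-continuous, $\rho_f = |f|$ is $\tau$-continuous as a composition of continuous maps. Thus $\rho_f$ is a continuous seminorm on $(X,\tau)$.

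Now I would invoke Theorem~\ref{finest and original topology have same seminorm}: since $\rho_f$ is a seminorm continuous with respect to $\tau$, it is also continuous with respect to $\tau_F$. It remains to recover continuity of $f$ itself from continuity of $\rho_f = |f|$. For this I would use that $f$ is linear and that $|f(x)| = \rho_f(x)$ is $\tau_F$-continuous: given $\varepsilon > 0$, the set $\{x : \rho_f(x) < \varepsilon\}$ is a $\tau_F$-neighborhood of $0_X$ on which $|f| < \varepsilon$, which gives continuity of $f$ at $0_X$, and hence everywhere by linearity and the fact that $(X,\tau_F)$ is a topological vector space (being locally convex). Therefore $f \in (X,\tau_F)'$, completing the reverse inclusion.

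The main obstacle, though a mild one, is simply making sure the seminorm–functional dictionary is applied cleanly: one must confirm that $\rho_f$ genuinely lands in $[0,\infty)$ so that it qualifies as a seminorm (not merely an extended seminorm) eligible for Theorem~\ref{finest and original topology have same seminorm}, and that continuity of $|f|$ back-propagates to continuity of $f$. The latter is routine for locally convex topologies but relies on $(X,\tau_F)$ being a bona fide locally convex space, which is guaranteed by Theorem~\ref{construction}. No delicate estimate or new construction is needed beyond what the two preceding results already supply; the corollary is essentially a translation of the seminorm statement into the language of the dual.
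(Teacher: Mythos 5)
Your proof is correct and follows essentially the same route as the paper: pass from $f\in(X,\tau)^*$ to the continuous seminorm $|f|$, apply Theorem~\ref{finest and original topology have same seminorm} to get $\tau_F$-continuity of $|f|$, and conclude $f\in(X,\tau_F)^*$. The extra details you supply (finiteness of $|f|$, recovering continuity of $f$ from that of $|f|$, and the trivial inclusion from $\tau_F\subseteq\tau$) are exactly the routine steps the paper leaves implicit.
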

	\begin{proof}
		 Suppose $f\in (X,\tau)^*$. Then $|f|$ is a continuous seminorm on $(X,\tau)$. By Theorem \ref{finest and original topology have same seminorm}, $|f|$ is continuous with respect to $\tau_{F}$. Hence $f\in (X,\tau_F)^*$
	\end{proof}

\begin{definition}\label{finitely compatible norm} 
		\normalfont (\cite{spoens})	A \textit{finitely compatible norm} $\trinorm\cdot\trinorm$ for an extended normed linear space $(X,\parallel\cdot\parallel)$ is a usual norm on $X$ such that both $\trinorm\cdot\trinorm$ and $\parallel\cdot\parallel$ are equivalent on $X_{fin}$. 
		\end{definition}	
 The finest locally convex topology $\tau_{P_X}$ for an extended normed linear space  $(X,\parallel\cdot\parallel)$ is generated by the collection $\mathcal{P}$ of all finitely compatible norms on $X$ (Theorem 2.7, \cite{spoens}).
    
The following example shows that two different extended spaces may have the same flc topology.
	\begin{example}
		Let  $X=\mathbb{R}^2$. Then the extended norm  \[ \parallel(x, y)\parallel = \begin{cases}
		\text{$|y|,$} &\quad\text{if $x= 0 $}\\
		\text{$\infty$,} &\quad\text{if $x\neq 0,$ }\\
		\end{cases}\] and  the discrete extended norm  \[ \parallel(x,y)\parallel_{0,\infty}=
		\begin{cases}
		\text{$0,$} &\quad\text{if $x=y= 0 $}\\
		\text{$\infty$,} &\quad\text{otherwise }\\
		\end{cases}\] have different $X_{fin}$. Consequently, the extended normed spaces $(X, \parallel\cdot\parallel)$ and $(X, \parallel\cdot\parallel_{0,\infty})$ have different topologies. However, both spaces have the same finitely compatible norms as the dimension of $X$ is finite. Therefore the flc topologies of both spaces are equal.    
	\end{example}
	
	
Next we study separation of points from closed convex sets in an elcs $(X,\tau)$. But first we give the following result.  
	\begin{proposition} \label{Hausdorff condition for the flcs}
		Suppose $(X,\tau)$ is an elcs and $\tau_F$ is its  flc topology. Then $(X,\tau)$ is  Hausdorff if and only if $(X,\tau_{F})$ is Hausdorff.
	\end{proposition}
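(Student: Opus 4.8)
The plan is to settle the two implications separately, with only the second requiring real work. Since $\tau_F\subseteq\tau$ by Theorem \ref{construction}(a), a topology finer than a Hausdorff one is again Hausdorff; hence if $(X,\tau_F)$ is Hausdorff then so is $(X,\tau)$, which disposes of one direction at once. For the converse I would use that $(X,\tau_F)$ is locally convex and generated by the family $F=\cup_{V\in\mathfrak{B}}F_V$ of finite-valued seminorms from the proof of Theorem \ref{construction}, so that $(X,\tau_F)$ is Hausdorff exactly when $F$ separates the points of $X$. Thus I fix $x\neq 0_X$ and seek some $\rho\in F$ with $\rho(x)>0$. Because $\tau$ is generated by the Minkowski functionals $\{\rho_V:V\in\mathfrak{B}\}$ (Proposition \ref{base for elcs}), the intersection of all $\tau$-neighborhoods of $0_X$ equals $\bigcap_{V\in\mathfrak{B}}\{y\in X:\rho_V(y)=0\}$; since $(X,\tau)$ is Hausdorff this set is $\{0_X\}$, so there is a $V\in\mathfrak{B}$ with $\rho_V(x)>0$ (possibly $\rho_V(x)=\infty$).

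With this $V$ fixed, I would produce the separating seminorm by the same device used in Theorem \ref{construction}. Writing $X=X_{fin}^V\oplus M_V$ and decomposing $y=y_f+y_M$ accordingly, set $\rho'(y)=\rho_V(y_f)+|g(y_M)|$ for a linear functional $g$ on $M_V$ still to be chosen. Regardless of $g$, the function $\rho'$ is a genuine (finite-valued) seminorm on $X$ that \emph{coincides with} $\rho_V$ on $X_{fin}^V$, so $\rho_V$ and $\rho'$ induce the same topology there and hence $\rho'\in F_V\subseteq F$. It then remains only to choose $g$ so that $\rho'(x)>0$. If $x\in X_{fin}^V$, then $x=x_f$ and already $\rho'(x)=\rho_V(x)>0$, so $g=0$ works. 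If instead $x\notin X_{fin}^V$, then the component $x_M\in M_V$ is nonzero, and choosing (via a Hamel basis of $M_V$) any linear functional $g$ with $g(x_M)\neq 0$ gives $\rho'(x)\geq|g(x_M)|>0$. In either case $F$ separates $x$ from $0_X$, which yields that $(X,\tau_F)$ is Hausdorff.

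I expect the only genuine obstacle to be the second case, namely separating a point $x$ lying outside the finite subspace $X_{fin}^V$. There the extended seminorm $\rho_V$ takes the value $\infty$ at $x$ and so cannot itself be used to build a finite seminorm detecting $x$; the point of the complement construction is precisely to manufacture a finite seminorm in $F_V$ that sees the $M_V$-component of $x$ while leaving the behaviour on $X_{fin}^V$ unchanged. Everything else — the easy implication, the reduction to point separation, and the verification that $\rho'$ lies in $F_V$ (it literally equals $\rho_V$ on $X_{fin}^V$) — should be routine.
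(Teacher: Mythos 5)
Your proposal is correct and follows essentially the same route as the paper's proof: both select an extended seminorm that detects the point $x$ (the paper takes a continuous $\rho\in S(X,\tau)$, you take a Minkowski functional $\rho_V$ from the base), decompose $X=X_{fin}^{\rho}\oplus M$ relative to it, and build a finite seminorm that agrees with the extended one on the finite subspace while still seeing the complement component of $x$. The only cosmetic differences are that the paper uses the $\ell^1$-type sum $\sum_j|\alpha_j|$ over a Hamel basis of $M$ where you use a single linear functional $g$ chosen after a case split, and that the paper deduces $\tau_F$-continuity of the new seminorm from Theorem \ref{finest and original topology have same seminorm} (via $\rho'\leq\rho$) where you verify membership in $F_V$ directly.
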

	\begin{proof}
		Suppose $(X,\tau)$ is Hausdorff. Let $x\in X$ and $x\ne 0_X$. So there exists a $\rho\in S(X,\tau)$ such that $\rho(x)\ne 0$. Let $M_\rho$ be a vector subspace of $X$ such that $X=X_{fin}^\rho\oplus M_\rho $, and let  $\{b_j:j\in J\}$ be a  basis for $M_{\rho}$. Define a seminorm $\rho':X\rightarrow [0,\infty)$ by
		$$\rho'(x)=\rho(x_F)+\sum_{j\in J}|\alpha_j|,$$ 
		where $x=x_F+x_M$ for $x_F\in X_{fin}^\rho$ and $x_M=\sum_{j\in J} \alpha_j b_j\in M$. Since $\rho\in S(X,\tau) $  and $\rho'\leq \rho$, $\rho'$ is  continuous with respect to $\tau$. By Theorem \ref{finest and original topology have same seminorm}, $\rho'$ is continuous with respect to $\tau_{F}$. If  $\rho'(x)= 0$, then $\rho(x_F)=0$ and $\sum_{j\in J} |\alpha_j|=0$. Thus $x_M=0_X$ and  $\rho(x)=\rho(x_F)=\rho'(x_F)=0$. We arrive at a contradiction. Therefore $\rho'(x)\neq 0$.  Hence $(X,\tau_{F})$ is Hausdorff.\end{proof}
	
The following corollary follows from Corollary \ref{finest and original topology have same dual} and Proposition \ref{Hausdorff condition for the flcs}.
\begin{corollary}\label{separation_of_points_by_linear_functional}
	Suppose $(X,\tau)$ is a Hausdorff elcs and $0_X \neq x \in X$. Then there exists $f \in X^*$ such that $f(x)\neq 0$.
\end{corollary}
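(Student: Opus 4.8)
The plan is to reduce the statement to the classical fact that a Hausdorff locally convex space is separated by its continuous dual, and then transport the resulting functional back to $(X,\tau)$ via the identification of duals already established.

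First I would invoke Proposition \ref{Hausdorff condition for the flcs}: since $(X,\tau)$ is Hausdorff, so is $(X,\tau_F)$. By Theorem \ref{construction}, $(X,\tau_F)$ is a genuine locally convex space, so I am now working in the familiar setting of classical functional analysis where the Hahn--Banach theorem is available.

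Next, for the given $x\neq 0_X$, I would produce a functional $f\in (X,\tau_F)^*$ with $f(x)\neq 0$. Because $(X,\tau_F)$ is Hausdorff and locally convex, its topology is generated by a separating family of finite-valued continuous seminorms; hence there is a continuous seminorm $\rho$ with $\rho(x)=c>0$. On the line $\mathrm{span}(x)$ define $g(\lambda x)=\lambda c$, so that $|g|\leq\rho$ there, and extend $g$ by Hahn--Banach to a linear functional $f$ on $X$ satisfying $|f|\leq\rho$. Domination by the continuous seminorm $\rho$ makes $f$ continuous on $(X,\tau_F)$, and $f(x)=c\neq 0$.

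Finally, I would apply Corollary \ref{finest and original topology have same dual}, which identifies $(X,\tau_F)^*$ with $(X,\tau)^*=X^*$; thus the functional $f$ just constructed already belongs to $X^*$, completing the argument. There is no real obstacle here: all the difficulty has been absorbed into Proposition \ref{Hausdorff condition for the flcs} and Corollary \ref{finest and original topology have same dual}, and the only classical ingredient is the separation of points in a Hausdorff locally convex space. The single point to keep straight is that the separating functional must be found in the \emph{common} dual, which is exactly what the coincidence-of-duals corollary guarantees.
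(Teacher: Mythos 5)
Your proposal is correct and takes essentially the same route as the paper, which deduces the corollary directly from Proposition \ref{Hausdorff condition for the flcs} and Corollary \ref{finest and original topology have same dual}, leaving the classical separation of points in a Hausdorff locally convex space implicit. Your explicit Hahn--Banach construction of the functional dominated by a continuous seminorm simply fills in that standard step, so there is no substantive difference.
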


The following example demonstrates that  $(X,\tau)$ and $(X,\tau_F)$ may have different closed convex sets.
	
	\begin{example}
		Suppose $X=\mathbb{R}^2$ and $\tau$ is the topology induced by the extended norm 	\[ \parallel(x,y)\parallel =
		\begin{cases}
		\text{$|x|,$} &\quad\text{if $y= 0 $}\\
		\text{$\infty$,} &\quad\text{if $y\neq 0.$ }\\
		\end{cases}\]
		Clearly, $X_{fin} = \{(x,0): x \in \mathbb{R}\}$. Let $A =\{ (x,y): x\in \mathbb{R},~ |y|< 1\}$. Then $A$ is absolutely convex and absorbing. Also $A = \bigcup_{|y|< 1}\{X_{fin}+(0,y)\}$ is a union of a locally finite family of closed sets. So $A$ is closed in $(X,\tau)$ (Corollary 1.1.12, \cite{GTE}).  If $A$ were closed in  $(X,\tau_{F})$, then $\{x\in X:\rho_A(x)\leq 1\}\subseteq A$ (Theorem 5.3.2, \cite{tvsnarici}), where $\rho_A$ is the Minkowski functional  for the set $A$. This is not possible as $\rho_A\left((0,1)\right) =1 $ and $(0,1)\notin A.$\end{example}

\begin{definition}
		Let $X$ be an  elcs  and $A\subseteq X$. Then $a\in core(A)$ if for each $x\in X$, there exists a $\delta_x>0$ such that $a+tx\in A$ for every $0\leq t\leq \delta_x.$  
	\end{definition}
The following facts are immediate from the above definition: $0_X \in core(A)$ implies $A$ is absorbent; $A \subseteq B$ implies $core(A) \subseteq core(B)$.  

The next result is analogous to Proposition 2.6 of \cite{spoens}.
	
\begin{proposition}\label{separation of a point and a closed set}
		Let $(X,\tau)$ be a real Hausdorff elcs with the flc topology $\tau_F$. Suppose $A\subseteq X$ is convex and  $b \notin A $. Then the following statements are equivalent.
		\begin{itemize}
			\item[(i)] There exists  a continuous linear functional $f$ with $f(b)<\inf_{x\in A} f(x)$. 
			\item[(ii)] There exists a convex set $B\subseteq X$ with $b\in core(B)\cap int_{\tau}(B)$ and $B\cap A=\emptyset$.
	\end{itemize}\end{proposition}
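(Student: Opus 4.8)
The statement is a separation-type equivalence, and the two implications have very different flavors. The plan is to prove $(i)\Rightarrow(ii)$ by an easy explicit construction of the halfspace, and to prove $(ii)\Rightarrow(i)$ by reducing to a genuine locally convex separation theorem applied to the space $(X,\tau_F)$, using the machinery already established in the excerpt (in particular Corollary \ref{finest and original topology have same dual}, which identifies the duals of $(X,\tau)$ and $(X,\tau_F)$).

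For $(i)\Rightarrow(ii)$, suppose $f$ is a continuous linear functional with $f(b)<\inf_{x\in A}f(x)=:c$. Pick any real $r$ with $f(b)<r<c$ and set $B=\{x\in X: f(x)<r\}$. This is an open convex halfspace (open in $\tau$, since $f$ is $\tau$-continuous) that is disjoint from $A$ and contains $b$. The only work is checking $b\in core(B)\cap int_\tau(B)$: membership in $int_\tau(B)$ is immediate since $B$ is $\tau$-open and contains $b$, and $b\in core(B)$ follows because for any direction $x$ one has $f(b+tx)=f(b)+tf(x)$, which stays below $r$ for all sufficiently small $t\ge 0$ regardless of the sign of $f(x)$. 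So this direction is entirely routine.

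The substance is $(ii)\Rightarrow(i)$. Here I have a convex $B$ with $b\in core(B)\cap int_\tau(B)$ and $B\cap A=\emptyset$. The natural route is to separate the two disjoint convex sets $A$ and $B$ by a continuous linear functional. The key point is that $b\in int_\tau(B)$ together with $b\in core(B)$ should let me show $B$ has nonempty interior in the \emph{finest locally convex topology} $\tau_F$, not merely in $\tau$ — translating so that $b=0_X$, the condition $0_X\in core(B)$ makes $B$ absorbing, and absorbing convex sets are exactly those whose Minkowski functional is a genuine (finite) seminorm, hence a $\tau_F$-continuous gadget giving a $\tau_F$-neighborhood inside $B$. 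With $B$ having nonempty $\tau_F$-interior and $A$ convex and disjoint from it, I apply the Hahn–Banach separation theorem in the locally convex space $(X,\tau_F)$ to get a $\tau_F$-continuous linear functional $f$ separating $A$ and $B$, which by Corollary \ref{finest and original topology have same dual} is automatically $\tau$-continuous as well. A small argument using $b\in core(B)$ (so that $b$ is not on the "boundary side") upgrades the weak separation $f(b)\le\inf_{x\in A}f(x)$ to the strict inequality $f(b)<\inf_{x\in A}f(x)$ demanded in $(i)$.

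The main obstacle I anticipate is precisely the claim that $B$ has nonempty $\tau_F$-interior — equivalently, that the core/$int_\tau$ hypotheses on $B$ force its Minkowski functional (after recentering at $b$) to be a \emph{finite} seminorm whose unit ball is a $\tau_F$-neighborhood of $0_X$. The two hypotheses $b\in core(B)$ and $b\in int_\tau(B)$ are doing complementary jobs: the core condition gives absorbency (finiteness of the Minkowski functional in every direction), while the $\tau$-interior condition should pin down that the induced seminorm is actually continuous, so that Theorem \ref{finest and original topology have same seminorm} applies to transfer that continuity to $\tau_F$. Getting these two conditions to combine cleanly into "$B$ is a $\tau_F$-neighborhood of $b$," so that standard locally convex separation becomes available, is where the real care is needed; once that is in hand, the separation and the strictness upgrade are standard.
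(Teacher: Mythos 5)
Your architecture for (ii)$\Rightarrow$(i) is the same as the paper's: recenter $B$ at $b$, turn the hypotheses into a finite $\tau$-continuous seminorm, transfer its continuity to $\tau_F$ via Theorem \ref{finest and original topology have same seminorm}, and then invoke classical locally convex separation in $(X,\tau_F)$ together with Corollary \ref{finest and original topology have same dual}; your (i)$\Rightarrow$(ii) via a halfspace is routine and correct. However, there is a genuine gap at exactly the point you flag as the crux. You assert that for $C=B-b$, absorbency (coming from $b\in core(B)$) makes the Minkowski functional of $C$ a ``genuine (finite) seminorm.'' This is false in general: the gauge of a convex absorbing set is only a sublinear functional (positively homogeneous for $t>0$ and subadditive); it satisfies $\rho(\alpha x)=|\alpha|\rho(x)$ only when the set is balanced, and $C$ need not be symmetric about $0_X$. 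For instance, $C=(-1,\infty)\subseteq\mathbb{R}$ is convex and absorbing, but its gauge vanishes on $[0,\infty)$ and equals $|x|$ on $(-\infty,0)$, so it is no seminorm. This matters because the transfer result you rely on, Theorem \ref{finest and original topology have same seminorm}, is stated for seminorms, so your plan cannot be executed on the gauge of $C$ as written.

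The missing idea --- and the paper's resolution --- is a one-line symmetrization: set $K=C\cap(-C)$ with $C=B-b$. Then $K$ is absolutely convex, and $b\in core(B)\cap int_{\tau}(B)$ yields $0_X\in core(K)\cap int_{\tau}(K)$, so $\rho_K$ is a finite, $\tau$-continuous seminorm, to which Theorem \ref{finest and original topology have same seminorm} applies, making $\rho_K$ continuous for $\tau_F$. After that the two endgames differ only cosmetically: the paper observes that $B\cap A=\emptyset$ forces $b-a\notin K$, hence $\rho_K(b-a)\geq 1$ for all $a\in A$, so $b\notin cl_{\tau_F}(A)$, and then strictly separates the point $b$ from the $\tau_F$-closed convex set $cl_{\tau_F}(A)$; your variant would instead note that $\{x\in X:\rho_K(x-b)<1\}\subseteq b+K\subseteq B$ exhibits $b$ as a $\tau_F$-interior point of $B$, separate the disjoint convex sets $A$ and $B$, and upgrade to a strict inequality at $b$ (your idea of using $core(B)$ for this upgrade does work: if $f\neq 0$, pick $x$ with $f(x)\neq 0$; then $b\pm tx\in B$ for small $t>0$, so $f(b)<\sup_{y\in B}f(y)\leq\inf_{x\in A}f(x)$). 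Both endgames are standard, but without the symmetrization step your argument does not go through.
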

	\begin{proof}
		(i)$\Rightarrow$ (ii). Choose a scalar $\alpha $ with $f(b)<\alpha<\inf_{x\in A} f(x)$. Define $B=f^{-1}\left( (-\infty, \alpha] \right)$. Clearly, $B\cap A = \emptyset$ and $b \in int_{\tau}(B)$. To see $b \in core(B)$,  note that for any $x \in X$, we have $b+tx \in B$ for $0\leq t\leq \delta_x$, where  $\delta_x = \frac{\alpha - f(b)}{|f(x)|+1}$.
		
		 (ii)$\Rightarrow$ (i). Suppose $B$ satisfies (ii). Take $K=C\cap (-C)$ for $C= B-b$. Since  $b\in core(B)\cap int_{\tau}(B)$, we have $0_X\in core(K)\cap int_{\tau}(K)$. Hence $K$ is an absolutely convex and absorbing neighborhood of $0_X$ in $(X,\tau)$. Consequently, the Minkowski  functional $\rho_K$ is a  continuous seminorm  on $(X,\tau)$. By Theorem \ref{finest and original topology have same seminorm}, $\rho_K$ is continuous with respect to $\tau_F$. Since $b-a \notin K$ for any $a \in A$, we have $\rho_K(b, A) = \inf\{\rho_K(b-a): a \in A\}\geq 1$. So $b\notin cl_{\tau_F}{A}$. Hence by Theorem 4.25 of \cite{faaidg}, there is a continuous linear functional $f$ with  $f(b)<\inf_{x\in A} f(x).$  \end{proof}

	\begin{theorem}\label{closed convex sets in the flcs}
		Let $(X,\tau )$ be a real Hausdorff elcs with the flc topology $\tau_F$, and let $A\subseteq X$. If both $A$ and $A^c$ are convex, then the following statements are equivalent.
		\begin{itemize}
			\item[(i)] $core(A^c)= A^c$ and $A$ is closed with respect to  $\tau$.
			\item[(ii)]  $A$ is closed with respect to $\tau_F$.
		\end{itemize} 
	\end{theorem}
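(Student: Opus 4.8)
The plan is to prove the two implications separately, since the work is asymmetric: $(ii)\Rightarrow(i)$ is essentially formal, while $(i)\Rightarrow(ii)$ is where the separation machinery does the work.

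For $(ii)\Rightarrow(i)$, I would assume $A$ is $\tau_F$-closed. Since $\tau_F\subseteq\tau$ by Theorem \ref{construction}$(a)$, every $\tau_F$-closed set is $\tau$-closed, so $A$ is $\tau$-closed at once. For the equality $core(A^c)=A^c$, the inclusion $core(A^c)\subseteq A^c$ is immediate from the definition. For the reverse inclusion, note that $A^c$ is $\tau_F$-open and $(X,\tau_F)$ is a (locally convex) topological vector space, and in any topological vector space the topological interior is contained in the algebraic interior: for $b\in A^c$ and any $x\in X$, continuity of $t\mapsto b+tx$ at $t=0$ supplies a $\delta_x>0$ with $b+tx\in A^c$ for all $0\leq t\leq\delta_x$. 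Hence $A^c\subseteq core(A^c)$, which gives the desired equality.

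For $(i)\Rightarrow(ii)$, the key observation is that the hypotheses are tailored precisely so that the choice $B=A^c$ verifies condition (ii) of Proposition \ref{separation of a point and a closed set}. Indeed, $A^c$ is convex by assumption; since $A$ is $\tau$-closed, $A^c$ is $\tau$-open, so any $b\in A^c$ lies in $int_\tau(A^c)$; the hypothesis $core(A^c)=A^c$ gives $b\in core(A^c)$; and trivially $A^c\cap A=\emptyset$. I would therefore fix an arbitrary $b\in A^c$ and apply the implication (ii)$\Rightarrow$(i) of Proposition \ref{separation of a point and a closed set} with this $B$, obtaining a continuous linear functional $f$ with $f(b)<\inf_{x\in A}f(x)$. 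By Corollary \ref{finest and original topology have same dual}, $f$ is also $\tau_F$-continuous. Choosing $\alpha$ with $f(b)<\alpha<\inf_{x\in A}f(x)$, the set $U=f^{-1}\left((-\infty,\alpha)\right)$ is a $\tau_F$-open convex neighborhood of $b$ that misses $A$, so $b\in int_{\tau_F}(A^c)$. As $b$ was arbitrary, $A^c$ is $\tau_F$-open and $A$ is $\tau_F$-closed.

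I expect the only genuine content to lie in the bookkeeping of the forward direction: recognizing that $core(A^c)=A^c$, the $\tau$-closedness of $A$, and the convexity of $A^c$ are exactly the three ingredients demanded by Proposition \ref{separation of a point and a closed set}$(ii)$, so that no fresh separation argument need be constructed. The remaining steps — transferring $f$ from $\tau$ to $\tau_F$ via Corollary \ref{finest and original topology have same dual}, and converting the separating functional into a $\tau_F$-open half-space around $b$ — are routine. The standing assumptions that $(X,\tau)$ be real and Hausdorff enter only through Proposition \ref{separation of a point and a closed set}, which is where Hahn–Banach separation is invoked.
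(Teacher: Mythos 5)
Your proof is correct, and one half of it takes a genuinely different route from the paper. The forward direction (i)$\Rightarrow$(ii) is essentially the paper's argument: both take $B=A^c$ in Proposition \ref{separation of a point and a closed set}, apply its implication (ii)$\Rightarrow$(i) to an arbitrary $b\notin A$, and convert the resulting functional into a $\tau_F$-open half-space containing $b$ and missing $A$ (you make explicit the transfer of continuity from $\tau$ to $\tau_F$ via Corollary \ref{finest and original topology have same dual}, which the paper leaves implicit). Where you diverge is in (ii)$\Rightarrow$(i). The paper proves $A^c\subseteq core(A^c)$ by first invoking Hahn--Banach separation in the Hausdorff locally convex space $(X,\tau_F)$ (Theorem 4.25 of \cite{faaidg}) to separate $x_0\notin A$ from the $\tau_F$-closed convex set $A$, and then feeding that functional back into the implication (i)$\Rightarrow$(ii) of Proposition \ref{separation of a point and a closed set} to produce a convex set $B\subseteq A^c$ with $x_0\in core(B)\subseteq core(A^c)$. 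You instead use the elementary fact that in a topological vector space every open set equals its core: since $(X,\tau_F)$ is a genuine locally convex topological vector space, continuity of $t\mapsto b+tx$ at $t=0$ gives $A^c\subseteq core(A^c)$ directly once $A^c$ is $\tau_F$-open, and $\tau$-closedness of $A$ follows at once from $\tau_F\subseteq\tau$ (a point the paper does not even record). Your route is more elementary and slightly more general in that direction --- it needs neither the convexity of $A$ and $A^c$, nor the Hausdorff hypothesis, nor any separation theorem --- whereas the paper's route has the expository virtue of running both implications through the same separation proposition. Both arguments are sound.
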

	\begin{proof}
		(i)$\Rightarrow$ (ii). Suppose  $x_0\notin A$. Then by the hypothesis  $x_0\in core(A^c)\cap int_{\tau}(A^c)$. By Proposition \ref{separation of a point and a closed set}, we can find a $\alpha\in\mathbb{R}$ and $f\in X^*$ with $x_0\in f^{-1}(-\infty, \alpha)$ and $A\cap  f^{-1}(-\infty, \alpha)=\emptyset$. Therefore $x_0\notin cl_{\tau_F}{A}.$
		
		(ii)$\Rightarrow$(i).  Let $x_0\notin A$. There exists a continuous linear functional $f$ such that $f(x_0)<\inf_{x\in A} f(x)$ (Theorem 4.25, \cite{faaidg}). By  Proposition \ref{separation of a point and a closed set}, there exists a $B\subseteq X$ such that $x_0\in core(B)$ and $B\cap A=\emptyset$. Thus $x_0\in core(A^c)$.\end{proof}

We end this section with an application of Theorem \ref{finest and original topology have same seminorm} in the classical function space theory. Recall that for a metric space $(X,d)$, $C(X)$ denotes the set of all continuous real valued functions on $(X,d)$. A \textit{bornology} on $X$ is a collection of nonempty subsets of $X$ that covers $X$ and is closed under finite union and taking subsets of its members. A \textit{base} for a bornology $\mathcal{B}$ is any subfamily $\mathcal{B}_0$ of $\mathcal{B}$ which is cofinal in $\mathcal{B}$ under set inclusion. If members of $\mathcal{B}_0$ are closed in $(X,d)$, then $\mathcal{B}$ is said to have a \textit{closed base}.
 
The most commonly used topology on $C(X)$ is the classical topology of uniform convergence on $\mathcal{B}$, usually denoted by $\tau_{\mathcal{B}}$ (see \cite{Suc} for definition). 

 
The topology $\tau_{\mathcal{B}}$ may be seen to be induced by the collection $\left\lbrace \rho_B:B\in \mathcal{B}_0 \right\rbrace$  of extended seminorms, where   $\rho_B(f) =\sup_{x\in B}|f(x)|$. So $(C(X), \tau_{\mathcal{B}})$ is an elcs.
 
For a bornology $\mathcal{B}$ on $(X,d)$ with a closed base, in \cite{Suc}, Beer and Levi introduced a variational form of $\tau_{\mathcal{B}}$ called  the topology of strong uniform convergence on $\mathcal{B}$, denoted by $\tau^{s}_{\mathcal{B}}$ (see \cite{Suc} for definition). 

 
The topology $\tau_\mathcal{B}^s$ on $C(X)$ can also be seen to be induced by extended seminorms of the form $$\rho_B^s(f) =\inf_{\delta>0}\left\lbrace \sup_{x\in B^\delta}|f(x)|\right\rbrace, $$ where $B\in \mathcal{B}_0$.    
 
In general, $\tau_{\mathcal{B}}$ and $\tau_{\mathcal{B}}^{s}$ are not equal on $C(X)$. However, if the bornology $\mathcal{B}$ is shielded from closed sets, then $\tau_\mathcal{B} =\tau_\mathcal{B}^s$ on $C(X)$ (Theorem 4.1 in \cite{Ucucas}).
 
For a nonempty $A\subseteq X$, a superset $A_1$ of $A$ is said to be a \textit{shield} for $A$ if for every closed subset $C$ of $X$ with $C\cap A_1 = \emptyset$, we have $C$ cannot be near to $A$, that is, there exists a $\delta > 0$ such that $C\cap A^\delta = \emptyset$ (\cite{Bcas}). Since $A^{\delta} = \overline{A}^{\delta}$, $A_1$ is a shield for $A$ if and only if it is a shield for $\overline{A}$. A bornology $\mathcal{B}$ is called \textit{shielded from closed sets} if for each $B\in \mathcal{B}$, $\mathcal{B}$  contains a shield for $B$. 	
 
 Our next result examines when the flc topologies for the extended locally convex spaces $(C(X), \tau_{\mathcal{B}})$ and $(C(X), \tau_{\mathcal{B}}^{s})$ coincide.  
 \begin{theorem}
 	Let $(X,d)$ be a metric space and let $\mathcal{B}$ be a bornology on $(X,d)$ with a closed base $\mathcal{B}_0$. Suppose $\tau_F$ and $\tau_{F}^s$ are the flc  topologies for $\left( C(X),\tau_\mathcal{B}\right) $ and $\left( C(X),\tau_\mathcal{B}^s\right)$, respectively. Then the following statements are equivalent.
 	\begin{itemize}
 		\item[(i)] $\tau_{F}=\tau_F^s$;
 		\item[(ii)] $\tau_\mathcal{B}=\tau_\mathcal{B}^s$;
 		\item[(iii)] $\mathcal{B}$ is shielded from closed sets.
 	\end{itemize}
 \end{theorem}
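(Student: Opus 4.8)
The plan is to close the cycle $(ii)\Rightarrow(i)\Rightarrow(iii)\Rightarrow(ii)$, two legs of which are essentially free. For $(ii)\Rightarrow(i)$ I would note that, by Theorem \ref{construction}, the flc topology is uniquely determined by the ambient topology as the finest locally convex topology coarser than it; so if $\tau_{\mathcal B}=\tau_{\mathcal B}^s$ on $C(X)$, then trivially $\tau_F=\tau_F^s$. For $(iii)\Rightarrow(ii)$ I would simply invoke Theorem 4.1 of \cite{Ucucas}, which yields $\tau_{\mathcal B}=\tau_{\mathcal B}^s$ whenever $\mathcal B$ is shielded from closed sets. All the genuine content lies in $(i)\Rightarrow(iii)$, which I would prove by contraposition.

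The first step is to recast equality of the flc topologies in terms of seminorms. Since $\rho_B\le\rho_B^s$ for each $B\in\mathcal B_0$ we have $\tau_{\mathcal B}\subseteq\tau_{\mathcal B}^s$, and by Theorem \ref{finest and original topology have same seminorm} the continuous seminorms of an elcs and of its flc topology coincide. As a locally convex topology is determined by its continuous seminorms, $\tau_F=\tau_F^s$ holds \emph{if and only if} every $\tau_{\mathcal B}^s$-continuous seminorm is already $\tau_{\mathcal B}$-continuous. Moreover, combining Lemma \ref{condition for a cotinuous linear functional} with the directedness of $\mathcal B_0$ (a bornology is closed under finite unions, $\mathcal B_0$ is cofinal, and $(B_1\cup B_2)^\delta=B_1^\delta\cup B_2^\delta$), a seminorm $p$ on $C(X)$ is $\tau_{\mathcal B}$-continuous exactly when $p\le C\rho_{B'}$ for some $B'\in\mathcal B_0$ and $C>0$.

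Now suppose $\mathcal B$ is not shielded from closed sets. A short reduction, using that a shield for a larger set is a shield for a smaller one, first lets me locate a \emph{closed} member $B\in\mathcal B_0$ with no shield in $\mathcal B$. Following the proof of Theorem \ref{construction}, I would then build a genuine finite-valued seminorm $q$ on $C(X)$ that agrees with $\rho_B^s$ on the finite subspace $X_{fin}^{B,s}=\{f:\rho_B^s(f)<\infty\}$ (extending by an algebraic complement); this $q$ lies in the generating family $F$ for $\tau_F^s$ and satisfies $q\le\rho_B^s$, so it is $\tau_{\mathcal B}^s$-continuous. If $\tau_F=\tau_F^s$, the seminorm reduction forces $q$ to be $\tau_{\mathcal B}$-continuous, hence $q\le C\rho_{B'}$ for some closed $B'\in\mathcal B_0$ and $C>0$. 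Since $B\cup B'$ fails to be a shield for $B$, there is a closed set $C_0$ with $C_0\cap(B\cup B')=\emptyset$ and $d(B,C_0)=0$. Putting $f=d(\cdot,B\cup B')/\big(d(\cdot,B\cup B')+d(\cdot,C_0)\big)$ gives a continuous function with $0\le f\le1$, $f\equiv0$ on $B'$ and $f\equiv1$ on $C_0$; because points of $C_0$ come arbitrarily close to $B$ one checks $\rho_B^s(f)=1$ and $\rho_{B'}(f)=0$. As $f\in X_{fin}^{B,s}$ this yields $1=\rho_B^s(f)=q(f)\le C\rho_{B'}(f)=0$, a contradiction, so $\tau_F\ne\tau_F^s$.

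The hard part will be the $(i)\Rightarrow(iii)$ construction, for two reasons: one must work with a genuine finite-valued seminorm rather than the extended seminorm $\rho_B^s$ directly, which is what forces the splitting-off of $X_{fin}^{B,s}$ in the style of Theorem \ref{construction}; and one must exhibit a single bounded continuous witness $f$ vanishing on $B'$ yet failing to vanish strongly near $B$. The distance-quotient formula handles the latter cleanly, while the reduction to a closed $B\in\mathcal B_0$ is exactly what makes $B\cup B'$ closed, so that the formula is legitimate; verifying $f\in X_{fin}^{B,s}$ and the two seminorm values is then routine.
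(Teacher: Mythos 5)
Your proposal is correct, and it reuses the paper's essential machinery --- the reduction to a closed member of $\mathcal{B}$ without a shield, the finite-valued seminorm built by splitting $C(X)=C(X)_{fin}^{B,s}\oplus M$ so that it agrees with $\rho_B^s$ on the finite subspace, Theorem \ref{finest and original topology have same seminorm}, and a Urysohn-type witness function --- but the logical frame of your $(i)\Rightarrow(iii)$ is genuinely different. The paper argues by contradiction with a \emph{net}: for every $B\in\mathcal{B}$ containing the unshielded set $B_0$ it takes a Urysohn function $f_B$ vanishing on $B$ and equal to $1$ on a closed set near $B_0$, observes that $f_B\to 0$ in $\tau_{\mathcal{B}}$ and hence in $\tau_F$, while the split seminorm satisfies $\rho(f_B)\geq 1$, so $f_B\not\to 0$ in $\tau_F^s$. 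You instead convert (i) into a seminorm-domination statement: by Theorem \ref{finest and original topology have same seminorm} the flc topologies coincide iff every $\tau_{\mathcal{B}}^s$-continuous finite seminorm is $\tau_{\mathcal{B}}$-continuous, and by Lemma \ref{condition for a cotinuous linear functional} plus directedness of $\mathcal{B}_0$ this forces $q\le C\rho_{B'}$ for a \emph{single} $B'\in\mathcal{B}_0$; one explicit distance-quotient function for the disjoint closed pair $(B\cup B',C_0)$ then contradicts that inequality, since $q(f)=\rho_B^s(f)=1$ while $\rho_{B'}(f)=0$. In effect, Lemma \ref{condition for a cotinuous linear functional} lets you fix the index $B'$ that the paper handles by running a net over all $B\supseteq B_0$; this buys a net-free, single-witness argument and a reusable criterion for equality of flc topologies, at the cost of verifying the domination characterization of $\tau_{\mathcal{B}}$-continuous seminorms (which needs the cofinality of $\mathcal{B}_0$ and closure of $\mathcal{B}$ under finite unions), something the paper's net argument never requires. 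All the individual steps you outline (the passage to a closed $B\in\mathcal{B}_0$ without a shield, $q\le\rho_B^s$, $f\in X_{fin}^{B,s}$, $\rho_B^s(f)=1$, $\rho_{B'}(f)=0$) check out.
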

 \begin{proof}
 	The implication (iii)$\Rightarrow$ (ii)  follows from Theorem 4.1 of \cite{Ucucas}, and the implication (ii)$\Rightarrow$ (i) is immediate. 
 	
 	(i)$\Rightarrow$ (iii). Suppose there exists a closed set $B_0 \in \mathcal{B}$ such that $B_0$ has no shield in $\mathcal{B}$. So for every $B\in \mathcal{B}$ with $B_0\subseteq B$, there exists a closed set $C_B$ which is disjoint from $\overline{B}$ and $ (B_0)^{\delta} \cap C_B \neq \emptyset$ for each $\delta >0$. By Urysohn's lemma, for each $B \in \mathcal{B}$ with $B_0\subseteq B$, there exists a continuous function $f_B : (X,d) \to [0,1]$ such that $f_B(B)=\{0\}$ and $f_B(C_B)=\{1\}$. If we direct the set $\{B\in \mathcal{B},B_0\subseteq B\}$ by set inclusion, then the net $f_B\to 0$ in $(C(X),\tau_\mathcal{B})$. Consequently, $f_B\to 0$ in $(C(X),\tau_F)$. Let $C(X)_{fin}^{B_0}=\{h\in C(X): \rho_{B_0}^s(h)<\infty\}$. Suppose $M$ is a subspace of $C(X)$ with $C(X)=C(X)_{fin}^{B_0}\oplus M$. Define a seminorm $\rho:C(X)\rightarrow [0,\infty)$  by 
 	$$\rho(h)= \rho_{B_0}^s(h_f)+\mu(h_M), $$
 	where $h=h_f+h_M$ for $h_f\in C(X)_{fin}^{B_0}$, $h_M\in M$, and  $\mu $ is any seminorm on $M$. Since $\rho\leq \rho_{B_0}^{s}$,  $\rho$  is continuous with respect to $\tau_B^s$. By Theorem \ref{finest and original topology have same seminorm}, $\rho$ is continuous with respect to $\tau_F^s$.
 	
 	Observe that for each $B\in \mathcal{B}$ such that $B_0\subseteq B$, we have $f_B\in C(X)_{fin}^{B_0}$. Consequently, $\rho(f_B)= \rho_{B_0}^s(f_B)$. Since $ (B_0)^{\delta} \cap C_B \neq \emptyset$ for each $\delta >0$, it follows that $\rho(f_B)\geq 1$ for all $B \in \mathcal{B}$ with $B_0 \subseteq B$. Hence $f_B\nrightarrow 0$ in $(C(X), \tau_{F}^s)$. We arrive at a contradiction. \end{proof}

\section{finest locally convex topology on different spaces}
In this section, we find the flc topology for a subspace, quotient space, and also for a product of extended locally convex spaces. 

\begin{theorem}\label{finest topology for subspace}
	Let $(X,\tau)$ be an elcs with the flc topology $\tau_{F} $ and let $Y$ be a subspace of $X$. Then the topology $\tau_{F}|_Y$ on $Y$ is the the flc topology for the elcs $(Y,\tau|_Y)$. Moreover, $\left(Y, \tau_{F}|_Y \right)^*=\left( Y, \tau|_Y\right)^*.$
\end{theorem}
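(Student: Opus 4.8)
The plan is to first establish the topological equality $\tau_F|_Y=(\tau|_Y)_F$ and then read off the statement about duals from it. Note that $(Y,\tau|_Y)$ is itself an elcs: if $\mathfrak{B}$ is an absolutely convex neighborhood base of $0_X$ as in Proposition \ref{base for elcs}, then $\{V\cap Y:V\in\mathfrak{B}\}$ is an absolutely convex neighborhood base of $0_Y$, so $(Y,\tau|_Y)$ is an elcs by Proposition 4.7 of \cite{esaetvs} and hence carries its own flc topology, which I denote $\sigma$. The easy inclusion is $\tau_F|_Y\subseteq\sigma$: the restriction $\tau_F|_Y$ is a locally convex topology on $Y$, and since $\tau_F\subseteq\tau$ we have $\tau_F|_Y\subseteq\tau|_Y$; the maximality property $(b)$ of Theorem \ref{construction}, applied to the elcs $(Y,\tau|_Y)$, then forces $\tau_F|_Y\subseteq\sigma$.

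The substantive direction is $\sigma\subseteq\tau_F|_Y$. Since both are locally convex, it suffices to show that every $\sigma$-continuous seminorm $p$ on $Y$ is continuous for $\tau_F|_Y$. By Theorem \ref{finest and original topology have same seminorm} applied to $(Y,\tau|_Y)$, such a $p$ is $(\tau|_Y)$-continuous, so by Lemma \ref{condition for a cotinuous linear functional} there is $W\in\mathfrak{B}$ with $p\le\rho_{W\cap Y}$ on $Y$; because $Y$ is a subspace, the Minkowski functional of $W\cap Y$ computed in $Y$ agrees with $\rho_W|_Y$, giving $p\le\rho_W|_Y$. The strategy from here is to extend $p$ to a \emph{finite} $\tau$-continuous seminorm $r$ on all of $X$: once this is done, Theorem \ref{finest and original topology have same seminorm} (now for $(X,\tau)$) makes $r$ continuous for $\tau_F$, and therefore $p=r|_Y$ is continuous for $\tau_F|_Y$, as required.

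The extension step is the main obstacle, because the dominating function $\rho_W$ is only an \emph{extended} seminorm (infinite off $X_{fin}^W$), so the classical Hahn–Banach seminorm extension does not apply verbatim. I would handle this by infimal convolution: set $q(x)=\inf_{y\in Y}\{p(y)+\rho_W(x-y)\}$. A short check shows $q$ is an extended seminorm on $X$ with $q\le\rho_W$ everywhere and $q|_Y=p$ (the inequality $q\ge p$ on $Y$ uses $p\le\rho_W|_Y$ together with the triangle inequality for $p$), and that $q$ is finite exactly on the subspace $S=Y+X_{fin}^W$. I then fix an algebraic complement $X=S\oplus Z$ and define $r=q$ on $S$, extended by any finite seminorm on $Z$; this $r$ is a finite seminorm on $X$ restricting to $p$ on $Y$. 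The delicate bookkeeping is verifying $r\le\rho_W$ on all of $X$: on $S$ this is $q\le\rho_W$, while for $x\notin S$ one has $x\notin X_{fin}^W$ (since $X_{fin}^W\subseteq S$ and $S\cap Z=\{0\}$), so $\rho_W(x)=\infty$ and the bound is automatic. Hence $\{r<1\}\supseteq\{\rho_W<1\}$ is a $\tau$-neighborhood of $0_X$, so $r$ is $\tau$-continuous, completing the extension.

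Combining the two inclusions yields $\tau_F|_Y=\sigma=(\tau|_Y)_F$. The final assertion about duals is then immediate: applying Corollary \ref{finest and original topology have same dual} to the elcs $(Y,\tau|_Y)$ gives $(Y,\sigma)^*=(Y,\tau|_Y)^*$, and substituting $\sigma=\tau_F|_Y$ gives $\left(Y,\tau_F|_Y\right)^*=\left(Y,\tau|_Y\right)^*$.
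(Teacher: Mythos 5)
Your proposal is correct, but its core construction differs from the paper's. Both arguments share the same skeleton: one inclusion is the formal maximality direction, and the substantive inclusion is obtained by manufacturing a \emph{finite} $\tau$-continuous seminorm on all of $X$ and invoking Theorem \ref{finest and original topology have same seminorm} to transfer its continuity to $\tau_F$. The difference lies in how that seminorm is built. The paper works at the level of neighborhoods: given an absolutely convex absorbing $\sigma$-neighborhood $U$ of $0$ in $Y$, it chooses $W\in\mathfrak{B}$ with $W\cap Y\subseteq U$, decomposes $Y=Y_{fin}^W\oplus M_W$ and $X=X_{fin}^W\oplus M_W\oplus N$, and glues $P(x)=\rho_W(x_f)+\rho_U(x_M)+\mu(x_N)$; it never extends anything exactly, only checks $\left\lbrace y\in Y: P(y)<\tfrac{1}{2}\right\rbrace\subseteq U$. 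You instead work at the level of seminorms and produce an \emph{exact} extension: the infimal convolution $q(x)=\inf_{y\in Y}\left\lbrace p(y)+\rho_W(x-y)\right\rbrace$ restricts to $p$ on $Y$ (this is where $p\le\rho_W|_Y$ and the triangle inequality enter), is dominated by $\rho_W$, and is finite precisely on $S=Y+X_{fin}^W$; a complement of $S$ then finishes. Note that $S=X_{fin}^W\oplus M_W$, so the ambient decompositions in the two proofs actually coincide --- the real difference is gluing versus inf-convolution on that subspace, and the two are not interchangeable: a glued seminorm of the form $a(y_f)+b(y_M)$ does not in general agree with $p$ on $Y$, so an exact extension genuinely requires something like your convolution. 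What each buys: your route yields a Hahn--Banach-type statement of independent interest (every $\tau|_Y$-continuous seminorm on a subspace extends to a finite $\tau$-continuous seminorm on $X$), from which both the topological identity and the dual statement fall out cleanly; the paper's gluing requires fewer verifications and is more economical for the theorem as stated. All the individual steps you use --- that $(Y,\tau|_Y)$ is an elcs, that the Minkowski functional of $W\cap Y$ computed in $Y$ equals $\rho_W|_Y$, the properties of $q$, and the reduction of $\sigma\subseteq\tau_F|_Y$ to continuity of $\sigma$-continuous seminorms --- check out.
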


\begin{proof} As $\tau_{F}\subseteq \tau$, $(Y, \tau_{F}|_Y)$ is a locally convex space and $\tau_{F}|_Y\subseteq \tau|_Y$. To show $\tau_{F}|_Y$ is the flc topology for $(Y, \tau|_Y)$, let $\sigma $ be a locally  convex topology on $Y$ coarser than $\tau|_Y$. Let $0$ denote the common zero element of $X$ and $Y$. Suppose $U$ is an absolutely convex, absorbing neighborhood of $0$ in $(Y,\sigma)$. Then there exists an absolutely convex neighborhood $V$ of $0$ in $(Y,\tau|_Y)$ with $V\subseteq U$ and such that $V=W\cap Y$ for some  absolutely convex neighborhood $W$ of $0$ in $(X,\tau)$. If $\rho_{W}$ is the Minkowski functional for the set $W$ and $X_{fin}^W=\{x\in X:\rho_W(x)<\infty \}$,  then  $Y_{fin}^W= X_{fin}^W\cap Y$, where $Y_{fin}^W=\{y\in Y: \rho_W(y)<\infty\}$. Let $M_W$ be a linear subspace of $Y$ such that $Y=Y_{fin}^W\oplus M_W$. Since $M_W\cap X_{fin}^W=\{0\}$, there exists a subspace $N$ of $X$ such that $X=X_{fin}^W\oplus M_W\oplus N$. Let $P:X\rightarrow [0,\infty)$ be  a seminorm on $X$ defined by
	$$P(x)=\rho_{W}(x_f)+\rho_{U}(x_M)+\mu(x_N),$$
	where $x=x_f+x_M+x_N$ for  $x_f\in X_{fin}^W, x_M\in M_W, x_N\in N $, $\mu $ is  any seminorm on $N$, and $\rho_{U}$ is the Minkowski functional for $U$. Clearly, $P\leq \rho_{W}$. So $P$  is continuous with respect to $\tau$. Consequently, by Theorem \ref{finest and original topology have same seminorm}, $P$ continuous with respect to $\tau_{F}$. As $U\subseteq Y$ and $Y$ is a subspace, we have $$\left\lbrace y\in Y :P(y)<\frac{1}{2}\right\rbrace \subseteq \left(\frac{W}{2} +\frac{U}{2}\right)\bigcap Y\subseteq \left( \frac{V}{2}+\frac{U}{2}\right)  \subseteq U.$$
	Thus $U$ is a neighborhood of $0$ in $(Y,\tau_F|_Y).$ Hence $\sigma\subseteq \tau_{F}|_Y$. \end{proof}

\begin{corollary}{\normalfont(Continuous Extension theorem)} Let $(X,\tau)$ be an elcs with the flc topology $\tau_{F}$ and let $Y$ be a subspace of $X$. Then for any continuous linear functional $f$ on $Y$, there exists a continuous linear functional $\hat{f}$ on $X$ with $\hat{f}|_M=f$.  
\end{corollary}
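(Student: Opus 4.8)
The plan is to reduce the statement to the classical Hahn--Banach extension theorem in the genuinely locally convex space $(X,\tau_F)$, where all the usual machinery applies, and then transport the conclusion back to $(X,\tau)$ using the coincidence of duals already established. (Here $\hat{f}|_Y = f$, i.e. $\hat f$ restricts to $f$ on the subspace $Y$.)

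First I would note that $f$ is continuous on $(Y,\tau|_Y)$, so by Theorem \ref{finest topology for subspace} it is also continuous with respect to the subspace topology $\tau_F|_Y$; equivalently $f \in (Y,\tau_F|_Y)^*$. The crucial point is that $\tau_F|_Y$ is exactly the topology that $Y$ inherits as a subspace of the locally convex space $(X,\tau_F)$. Thus $f$ is a continuous linear functional on a linear subspace of an honest locally convex space, and we are squarely in the classical setting.

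Next I would invoke Hahn--Banach. Since $\tau_F$ is generated by a family of seminorms, continuity of $f$ on $(Y,\tau_F|_Y)$ means there exist a continuous seminorm $q$ on $(X,\tau_F)$ and a constant $C>0$ with $|f(y)| \le C\,q(y)$ for all $y\in Y$. Setting $p=Cq$, which is again a continuous seminorm on the whole space $(X,\tau_F)$, the dominated form of the Hahn--Banach theorem yields a linear functional $\hat f$ on $X$ with $\hat f|_Y=f$ and $|\hat f| \le p$ on all of $X$. The bound $|\hat f|\le p$ forces $\hat f$ to be $\tau_F$-continuous, so $\hat f \in (X,\tau_F)^*$.

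Finally, by Corollary \ref{finest and original topology have same dual} the spaces $(X,\tau_F)$ and $(X,\tau)$ share the same dual, whence $\hat f \in (X,\tau)^*$ is the desired continuous extension. The only genuinely nontrivial ingredient is the passage to $(X,\tau_F)$: the extended space $(X,\tau)$ need not be a topological vector space, so Hahn--Banach cannot be applied directly there; everything hinges on first moving to $\tau_F$, where convexity and continuity behave classically, and the two dual-coincidence results (Theorem \ref{finest topology for subspace} on $Y$ and Corollary \ref{finest and original topology have same dual} on $X$) are precisely what make this detour loss-free.
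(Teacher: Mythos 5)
Your proof is correct and follows essentially the same route as the paper: pass from $(Y,\tau|_Y)^*$ to $(Y,\tau_F|_Y)^*$ via Theorem \ref{finest topology for subspace}, apply the classical Hahn--Banach continuous extension theorem in the locally convex space $(X,\tau_F)$, and return to $(X,\tau)^*$ via Corollary \ref{finest and original topology have same dual}. The only difference is cosmetic: you spell out the seminorm-dominated form of Hahn--Banach, whereas the paper cites the extension theorem for locally convex spaces directly.
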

\begin{proof}
	Let  $f\in (Y,\tau|_Y)^*$. By Theorem \ref{finest topology for subspace}, $f\in (Y,\tau_F|_Y)^*$. By Theorem 3.16 of \cite{lcsosborne}, there exists $\hat{f}\in (X,\tau_F)^* =(X,\tau)^*$ such that  $\hat{f}|_Y=f$.  \end{proof}

\begin{theorem}\label{finest topology for quotient space}
	Let $(X,\tau)$ be an elcs with the flc topology $\tau_F$ and let $Y$ be a closed subspace of $X$. Then the quotient topology $\pi\left(\tau_{F} \right) $ on $X/Y$ is the flc topology for the  elcs $\left( X/Y, \pi\left(\tau \right)\right)$, where $\pi(x) = x+Y$ for $x \in X$ denotes the quotient map. Moreover, $\left(X/Y,\pi(\tau) \right)^*=\left(X/Y,\pi(\tau_{F}) \right)^*.$  
\end{theorem}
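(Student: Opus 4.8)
The plan is to prove the topological identity $\pi(\tau_F)=\overline{\tau}_F$, where $\overline{\tau}:=\pi(\tau)$ and $\overline{\tau}_F$ denotes the flc topology of the elcs $(X/Y,\overline{\tau})$, by showing that the two locally convex topologies $\pi(\tau_F)$ and $\overline{\tau}_F$ possess exactly the same continuous seminorms; since a locally convex topology is determined by its family of continuous seminorms (these generate a neighborhood base of $\bar 0$), this yields equality. First I would record the preliminary facts. Because $\pi$ is a continuous and open linear surjection, the images under $\pi$ of an absolutely convex neighborhood base of $0_X$ in $(X,\tau)$ form an absolutely convex neighborhood base of $\bar 0$ in $(X/Y,\overline{\tau})$; by Proposition 4.7 of \cite{esaetvs}, $(X/Y,\overline{\tau})$ is therefore an elcs, so it has a well-defined flc topology $\overline{\tau}_F$. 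Likewise, since $(X,\tau_F)$ is locally convex, the quotient $(X/Y,\pi(\tau_F))$ is locally convex, and from $\tau_F\subseteq\tau$ together with the inclusion-preserving nature of the quotient construction we get $\pi(\tau_F)\subseteq\pi(\tau)=\overline{\tau}$, consistent with $\pi(\tau_F)$ being a candidate for the flc topology.

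The core of the argument is a chain of equivalences for an arbitrary seminorm $q$ on $X/Y$, which exploits that $q\circ\pi$ is again a seminorm on $X$ (vanishing on $Y$). By the universal property of the quotient topology, $q$ is $\pi(\tau_F)$-continuous if and only if $q\circ\pi$ is $\tau_F$-continuous, and $q$ is $\overline{\tau}$-continuous if and only if $q\circ\pi$ is $\tau$-continuous. Applying Theorem \ref{finest and original topology have same seminorm} to the elcs $(X,\tau)$, the seminorm $q\circ\pi$ on $X$ is $\tau_F$-continuous exactly when it is $\tau$-continuous. Stringing these together, $q$ is $\pi(\tau_F)$-continuous if and only if $q\circ\pi$ is $\tau_F$-continuous, if and only if $q\circ\pi$ is $\tau$-continuous, if and only if $q$ is $\overline{\tau}$-continuous. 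Finally, applying Theorem \ref{finest and original topology have same seminorm} to the elcs $(X/Y,\overline{\tau})$, this last condition is equivalent to $q$ being $\overline{\tau}_F$-continuous. Hence $\pi(\tau_F)$ and $\overline{\tau}_F$ admit precisely the same continuous seminorms, and as both are locally convex they coincide, which is the asserted identity.

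For the dual statement, I would simply invoke Corollary \ref{finest and original topology have same dual} for the elcs $(X/Y,\overline{\tau})$: its dual coincides with the dual of its flc topology $\overline{\tau}_F=\pi(\tau_F)$, so that $(X/Y,\pi(\tau))^*=(X/Y,\pi(\tau_F))^*$. I expect the main obstacle to be bookkeeping rather than conceptual, namely verifying carefully that $(X/Y,\overline{\tau})$ genuinely qualifies as an elcs, so that Theorem \ref{finest and original topology have same seminorm} and Corollary \ref{finest and original topology have same dual} become applicable, and that the correspondence between seminorms on $X/Y$ and seminorms on $X$ via composition with $\pi$ respects continuity in both directions. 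The hypothesis that $Y$ is closed is what guarantees the quotient is Hausdorff, but I note that the seminorm argument above does not otherwise rely on it. An alternative, more hands-on route would mirror the explicit construction in the proof of Theorem \ref{finest topology for subspace}, decomposing $X$ relative to $X_{fin}^{W}$ and exhibiting a dominating seminorm descending to $X/Y$; however, the seminorm-coincidence argument is shorter and avoids choosing complementary subspaces.
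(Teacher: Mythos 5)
Your proof is correct, but it follows a genuinely different route from the paper's. The paper verifies directly that $\pi(\tau_F)$ has the two defining properties of the flc topology from Theorem \ref{construction}: given a locally convex topology $\sigma\subseteq\pi(\tau)$ and an absolutely convex, absorbing $\sigma$-neighborhood $U$ of $0_{X/Y}$, it chooses a smaller $\pi(\tau)$-neighborhood $V$, pulls it back to $W=\pi^{-1}(V)$, fixes a complement $X=X_{fin}^{W}\oplus M_W$, forms the explicit seminorm $P(x)=\rho_W(x_f)+\rho_U([x_M])$, applies Theorem \ref{finest and original topology have same seminorm} once (on $X$) to get $\tau_F$-continuity of $P$, and checks $\pi\left(\left\lbrace x\in X: P(x)<\tfrac{1}{2}\right\rbrace\right)\subseteq U$. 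You instead show that $\pi(\tau_F)$ and the flc topology of $\left(X/Y,\pi(\tau)\right)$ admit exactly the same continuous seminorms, using the final-topology universal property of the quotient ($q$ is continuous if and only if $q\circ\pi$ is, both for $\tau$ and for $\tau_F$) together with two applications of Theorem \ref{finest and original topology have same seminorm} (once on $X$, once on $X/Y$); since both topologies are locally convex and a locally convex topology is generated by its continuous seminorms, equality follows, and the dual statement drops out of Corollary \ref{finest and original topology have same dual} applied to the quotient. Your route is shorter, avoids any choice of complementary subspace, and makes transparent that closedness of $Y$ is only needed for Hausdorffness; its one extra obligation---which you correctly discharge---is verifying that $\left(X/Y,\pi(\tau)\right)$ is itself an elcs (the open linear map $\pi$ carries an absolutely convex neighborhood base onto one for the quotient, so Proposition 4.7 of \cite{esaetvs} applies), without which Theorem \ref{finest and original topology have same seminorm} and Corollary \ref{finest and original topology have same dual} could not be invoked on $X/Y$. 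What the paper's more hands-on construction buys is an explicit family of generating seminorms for $\pi(\tau_F)$ and a proof in the same style as its subspace and product theorems, verifying the ``finest'' property directly rather than through the seminorm-coincidence characterization.
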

\begin{proof}	As $\tau_F$ is a locally convex topology and $\tau_F\subseteq \tau$, the quotient space  $\left( X/Y,\pi(\tau_{F})\right)$ is a locally convex space and $\pi\left( \tau_{F}\right)\subseteq \pi\left(\tau \right)$. Suppose $\sigma$ is any locally convex topology on $X/Y$ coarser than $\pi\left(\tau \right)$. If $U$ is an absolutely convex, absorbing neighborhood of $0_{X/Y}$ in $\left(X/Y, \sigma \right)$, then there exists an absolutely convex neighborhood $V$ of $0_{X/Y}$ in  $\left(X/Y, \pi(\tau) \right)$ with $V\subseteq U$. Note that  $W=\pi^{-1}\left(V \right) $  is an absolutely convex neighborhood of $0_X$ in $\left( X,\tau\right)$. Let  $M_{W}$ be a linear subspace of $X$ such that $X=X_{fin}^{W}\oplus M_{W}$.  Define a seminorm $P:X\rightarrow [0,\infty)$ as follows
	$$P(x)=\rho_{W}(x_f)+\rho_{U}([x_M]),$$
	where $x=x_f+x_M$ for $x_f\in X_{fin}^{W}, x_M\in M_W$ and $[x_M]=x_M+Y$.  Then $P$ is continuous with respect to $\tau$ as $P\leq \rho_{W}$. By Theorem \ref{finest and original topology have same seminorm}, $P$ is continuous with respect to $\tau_F$.
	
	We show that  $\pi\left( \{x\in X:P(x)<\frac{1}{2} \}\right) \subseteq U $. If $P(x = x_f+x_M)<\frac{1}{2},$ then $\rho_{W}(x_f)<\frac{1}{2}$ and $\rho_{U}([x_M])<\frac{1}{2}$. Consequently,  $x_f\in \frac{W}{2}$ and $x_M+Y\in\frac{U}{2}$. Thus $\pi(x) = x_f+Y +x_M+Y\in \left(\frac{V}{2}+\frac{U}{2}  \right)\subseteq\left(\frac{U}{2}+\frac{U}{2}\right) \subseteq U.$ \end{proof} 	

\begin{theorem}\label{finest topology  for product space}
	Suppose $\{(X_i,\tau_i):i\in I \}$ is a family of extended locally convex spaces and for each $i\in I$,  $\tau_{F_i}$ is the corresponding flc topology. Then the product topology $\displaystyle{\Pi_{i\in I}} \tau_{F_i}$ is the  flc topology for the elcs  $(X,\tau)$, where $\tau$ is the product topology $\displaystyle{\Pi_{i\in I} \tau_{i}}$ on  $X=\displaystyle{\Pi_{i\in I} X_i}$. Moreover, $\left( X,\displaystyle{\Pi_{i\in I}\tau_{F_i}}\right)^*=\left(X,\tau \right)^*.  $   
	
\end{theorem}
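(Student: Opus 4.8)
The plan is to show that the product topology $\Pi_{i\in I}\tau_{F_i}$ satisfies the two defining properties $(a)$ and $(b)$ of the flc topology in Theorem \ref{construction}; since those properties determine the flc topology uniquely, this forces $\Pi_{i\in I}\tau_{F_i}=\tau_F$, and the statement on duals is then immediate from Corollary \ref{finest and original topology have same dual}, because $(X,\tau)^*=(X,\tau_F)^*=(X,\Pi_{i\in I}\tau_{F_i})^*$. Property $(a)$ is routine: a product of locally convex spaces is locally convex, and since $\tau_{F_i}\subseteq\tau_i$ for every $i$ we have $\Pi_{i\in I}\tau_{F_i}\subseteq\Pi_{i\in I}\tau_i=\tau$.

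The real content is property $(b)$: if $\sigma$ is a locally convex topology on $X$ with $\sigma\subseteq\tau$, then $\sigma\subseteq\Pi_{i\in I}\tau_{F_i}$. It suffices to show that every absolutely convex, absorbing $\sigma$-neighborhood $U$ of $0_X$ is a neighborhood of $0_X$ in $\Pi_{i\in I}\tau_{F_i}$. Since $U$ is absorbing and absolutely convex, its Minkowski functional $\rho_U$ is a genuine (finite-valued) seminorm, and it is $\sigma$-continuous, hence $\tau$-continuous. Thus everything reduces to the following key assertion: every $\tau$-continuous seminorm $p$ on the product $(X,\tau)=\Pi_{i\in I}(X_i,\tau_i)$ is continuous with respect to $\Pi_{i\in I}\tau_{F_i}$. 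Granting this, $\rho_U$ is $\Pi_{i\in I}\tau_{F_i}$-continuous, so $\{x\in X:\rho_U(x)<1\}$ is a $\Pi_{i\in I}\tau_{F_i}$-open neighborhood of $0_X$ contained in $U$, and $(b)$ follows.

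To prove the key assertion I would proceed in two steps. First, I show $p$ depends on only finitely many coordinates. As $\{p<1\}$ is a $\tau$-neighborhood of $0_X$, it contains a basic product neighborhood $N$ constraining only the coordinates in some finite set $F\subseteq I$. If $z\in X$ vanishes on $F$, then $\lambda z\in N$ for every $\lambda>0$, so $\lambda\,p(z)=p(\lambda z)<1$ for all $\lambda$, forcing $p(z)=0$; writing $x=x^F+z$ with $x^F$ agreeing with $x$ on $F$ and vanishing off $F$, subadditivity gives $p(x)=p(x^F)$. Second, writing $\iota_i\colon X_i\to X$ for the embedding onto the $i$-th coordinate, so that $x^F=\sum_{i\in F}\iota_i(x_i)$, subadditivity yields $p(x)=p(x^F)\le\sum_{i\in F}p(\iota_i(x_i))=\sum_{i\in F}p_i(x_i)$, where $p_i:=p\circ\iota_i$ is a genuine seminorm on $X_i$. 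Since each $\iota_i\colon(X_i,\tau_i)\to(X,\tau)$ is continuous, each $p_i$ is $\tau_i$-continuous, hence $\tau_{F_i}$-continuous by Theorem \ref{finest and original topology have same seminorm}. Consequently $\sum_{i\in F}p_i\circ\pi_i$, with $\pi_i$ the $i$-th projection, is $\Pi_{i\in I}\tau_{F_i}$-continuous and dominates $p$, so $p$ is $\Pi_{i\in I}\tau_{F_i}$-continuous.

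I expect the splitting step to be the crux. A naive attempt to dominate $p$ by the Minkowski functionals of the sets appearing in $N$ fails: replacing those extended seminorms by genuine, $\tau_{F_i}$-continuous seminorms on each factor reverses the inequality one needs. Using $p_i=p\circ\iota_i$ sidesteps this, since these restrictions of the finite-valued $p$ are automatically genuine seminorms and dominate $p$ in the correct direction by subadditivity, while the finite-dependence step is exactly what legitimizes the finite sum $\sum_{i\in F}$ for an arbitrary, possibly infinite, product.
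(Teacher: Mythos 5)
Your proposal is correct, and it takes a genuinely different route from the paper. The paper also reduces to showing that an absolutely convex, absorbing $\sigma$-neighborhood $U$ is a $\Pi_{i\in I}\tau_{F_i}$-neighborhood of $0_X$, but it does so by picking a basic neighborhood $V=\bigcap_{j=1}^{N}\pi_{i_j}^{-1}(V_{i_j})\subseteq U$, choosing algebraic complements $X_{i_j}=X_{fin}^{V_{i_j}}\oplus M_{i_j}$ in each relevant factor, and hand-crafting seminorms $P_{i_j}(z)=\rho_{V_{i_j}}(z_f)+\rho_U(0,\ldots,z_{M_{i_j}},\ldots,0)$ that mix the factor's Minkowski functional with that of $U$; continuity of each $P_{i_j}$ follows from $P_{i_j}\leq\rho_{V_{i_j}}$ and Theorem \ref{finest and original topology have same seminorm}, and the proof ends with an explicit splitting argument showing $\bigcap_{j}\pi_{i_j}^{-1}\bigl(\{P_{i_j}<\tfrac{1}{2N}\}\bigr)\subseteq U$. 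You avoid both the direct-sum decompositions and that containment computation by isolating a clean, reusable lemma: every $\tau$-continuous finite-valued seminorm $p$ on the product depends only on finitely many coordinates (the homogeneity argument forcing $p(z)=0$ off the finite set $F$ is exactly right), and is then dominated by $\sum_{i\in F}(p\circ\iota_i)\circ\pi_i$, where each restriction $p\circ\iota_i$ is $\tau_i$-continuous and hence $\tau_{F_i}$-continuous by the same Theorem \ref{finest and original topology have same seminorm}. So both arguments hinge on the same transfer theorem, but yours applies it to canonical restrictions rather than constructed seminorms. What your version buys is conceptual clarity and a stronger intermediate statement --- the continuous seminorms of $(X,\tau)$ and of $(X,\Pi_{i\in I}\tau_{F_i})$ coincide, which is the product analogue of Theorem \ref{finest and original topology have same seminorm} and identifies the flc topology immediately; what the paper's version buys is uniformity of template, since its subspace and quotient proofs (Theorems \ref{finest topology for subspace} and \ref{finest topology for quotient space}) follow the identical decomposition-plus-domination pattern.
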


\begin{proof} 
	Clearly, $\left(\displaystyle{\Pi_{i\in I} X_i}, \displaystyle{\Pi_{i\in I}\tau_{F_i}}\right) $ is a locally convex space and $\displaystyle{\Pi_{i\in I}\tau_{F_i}}\subseteq \tau$.  Let $\sigma$ be a locally convex topology on $X$ coarser than $\tau$ and let $U$ be an absolutely convex, absorbing neighborhood of $0_X$ in $(X,\sigma)$. We can find an absolutely convex neighborhood $V$ of $0_X$ in $(X,\tau)$ such that $V\subseteq U$. Without loss of generality, we can assume $V=\pi^{-1}_{i_1}\left( V_{i_1}\right) \cap \pi^{-1}_{i_2}\left(V_{i_2}\right)\cap\ldots\cap\pi^{-1}_{i_N}\left( V_{i_N}\right) $, where for each $1\leq j\leq N$,  $V_{i_j}$  is an absolutely convex neighborhood of $0_{X_{i_j}}$ in $(X_{i_j},\tau_{i_j})$. If $\rho_{V}$ is the Minkowski functional for $V$, then it is easy to see that $$X_{fin}^V= \pi^{-1}_{i_1}\left( X_{fin}^{V_{i_1}}\right) \cap \pi^{-1}_{i_2}\left(X_{fin}^{V_{i_2}}\right)\cap...\cap\pi^{-1}_{i_N}\left( X_{fin}^{V_{i_N}}\right),$$ where  for each $1\leq j\leq N$, $X_{fin}^{V_{i_j}}=\{z\in X_{i_j}: \rho_{V_{i_j}}(z)<\infty \}$.  For $1\leq j\leq N$,  suppose $M_{i_j}$ a linear subspace of $X_{i_j}$ such that $X_{i_j}=X_{fin}^{V_{i_j}}\oplus M_{i_j}$.  For each $j$, consider the seminorm $P_{i_j}:X_{i_j}\rightarrow [0,\infty)$ with 
	$$P_{i_j}(z)=\rho_{V_{i_j}}(z_f)+\rho_{U}(0,0,...,z_{M_{i_j}},0,0...),$$ 
	where $z=z_f+z_{M_{i_j}}$ for $z_f\in X_{fin}^{V_{i_j}},$ $z_{M_{i_j}}\in M_{i_j}$. Since  $P_{i_j}\leq \rho_{V_{i_j}}$,  each $P_{i_j}$ is  continuous with respect to  $ \tau_{i_j}$. By Theorem \ref{finest and original topology have same seminorm}, $P_{i_j}$ is continuous with respect to $\tau_{ F_{i_j}}$. So to show $\sigma \subseteq \displaystyle{\Pi_{i\in I}\tau_{F_i}}$,  it is enough to show that $$\bigcap_{j=1}^{N}\pi^{-1}_{i_j}\left( \left\lbrace z\in X_{i_j}: P_{i_j}(z)<\frac{1}{2N}\right\rbrace \right)\subseteq U.$$ 
	
	Let $T_{i_j}=\left\lbrace x= (x^i)\in \frac{U}{2N}: x^i=0~ \text{for} ~i\ne i_j \right\rbrace\subseteq X$ for $1\leq j\leq N$. Then  $$\bigcap
	_{j=1}^{N}\pi^{-1}_{i_j}\left( \left\lbrace z\in X_{i_j}: P_{i_j}(z)<\frac{1}{2N}    \right\rbrace \right) \subseteq \bigcap_{j=1}^{N}\left( \pi^{-1}_{i_j}\left( \frac{V_{i_j}}{2N} \right)  +  T_{i_j}\right)$$ and 	$$ \bigcap_{j=1}^{N} \pi^{-1}_{i_j}\left( \frac{V_{i_j}}{2N}\right)   + \sum_{j=1}^{N} T_{i_j} \subseteq \frac{V}{2N}+\frac{U}{2} \subseteq U.$$
	It remains to show that $$\bigcap_{j=1}^{N}\left( \pi^{-1}_{i_j}\left( \frac{V_{i_j}}{2N} \right)  +  T_{i_j}\right)\subseteq \bigcap_{j=1}^{N} \pi^{-1}_{i_j}\left( \frac{V_{i_j}}{2N}\right)   + \sum_{j=1}^{N} T_{i_j}.$$
	Suppose $y=(y^i)\in\bigcap_{j=1}^{N}\left( \pi^{-1}_{i_j}\left(\frac{V_{i_j}}{2N} \right)+T_{i_j}\right).$ Then for each $1\leq j\leq N$, there exist $v^{i_j}\in \frac{V_{i_j}}{2N}$ and $t^{i_j}\in X_{i_j}$ with $y^{i_j}=v^{i_j}+t^{i_j}$ and $(0,0,0,...,t^{i_j}, 0,0,...)\in T_{i_j}.$ Let $u=(u^i)$ with 	\[ u^{i}=
	\begin{cases}
	\text{$v^{i_j},$} &\quad\text{if $i=i_j,$ for $ j= 1, 2,...,N $}\\
	\text{$y^i$,} &\quad\text{otherwise, }\\
	\end{cases}\]
	and $w=(w^i)$ with \[ w^{i}=
	\begin{cases}
	\text{$t^{i_j},$} &\quad\text{if $i=i_j$, for $ j= 1, 2,...,N  $}\\
	\text{$0$,} &\quad\text{otherwise. }\\
	\end{cases}\]
	Then $u\in\bigcap_{j=1}^{N} \pi^{-1}_{i_j}\left( \frac{V_{i_j}}{2N}\right)  $ and  $w\in\sum_{j=1}^{N} T_{i_j}$ with $y=u+w$, which completes the proof.\end{proof}

 \section{Metrizability and normability of $\tau_F$}
 In this section,  we study conditions for an elcs $(X,\tau)$ to be extended normable and for the flc topology $\tau_{F}$ to be normable. We also examine the metrizability of $\tau_F$. Throughout this section, every elcs is assumed to be Hausdorff.
 
We first recall the definition of a bounded set in a locally convex space.
 
\begin{definition}\label{bouded sets_lcs} \normalfont	Let $A$ be a subset of a locally convex space $X$. Then $A$ is said to be bounded in $X$ if for every  neighborhood $U$ of $0_X$ there exists a scalar $c>0$ such that $A\subseteq c U.$ \end{definition}
 
Since in an elcs the neighborhoods of $0_X$ are not necessarily absorbing,  a finite subset of an elcs need not satisfy the above definition. For example, if $(X,\tau)$ is an elcs and $x\notin X_{fin}$, then we can find a neighborhood $U$ of $0_X$ such that $x \notin cU$ for any $c\neq 0$. This leads us to the next definition.
 
 \begin{definition}\label{bouded sets}\normalfont Let $(X, \tau)$  be an elcs and $A\subseteq X$. Then $A$ is said to be bounded in $(X, \tau)$ if for every neighborhood $U$ of $0_X$ there exist $c>0$ and a finite subset $P\subseteq A$ such that $A\subseteq P+c U.$ \end{definition}
 
 \begin{remark}	If $(X,\tau)$ is a locally convex space, then Definitions \ref{bouded sets_lcs} and \ref{bouded sets} are equivalent.  
 \end{remark}     
 
 The following facts about bounded subsets of an elcs $(X,\tau)$ can be proved easily using  Definition \ref{bouded sets}.
 \begin{itemize}
 	\item[(a)] Every finite subset of $X$ is bounded.
 	\item[(b)] Every subset of a bounded set is bounded.
 	\item[(c)] Suppose $(X,\tau)$, $(Y,\sigma)$ are two elcs and $T:X\rightarrow Y$ is a  continuous linear operator. If $A$ is bounded in $(X,\tau)$, then $T(A)$ is bounded in $(Y,\sigma)$.
 	\item[(e)] No subspace $\left( \text{other than} \{0_X \}\right)$  is bounded.
 	\item[(f)] Suppose $(x_n)$ is a convergent sequence in an elcs $(X,\tau)$. Then $\{x_n:n\in \mathbb{N}\}$ is  bounded in  $(X, \tau)$. 
 	\item[(g)] If $A$ is bounded in $(X,\tau)$, then $A$ is also bounded in $(X,\tau_{F})$.
 \end{itemize}
 
 \begin{remark}
 	Suppose $A$  is a bounded set in an elcs $\left(X,\tau \right) $ and  $\tau$ is  induced by a collection $\{\rho_\alpha: \alpha\in \Lambda\}$ of extended seminorms. Then for each $\alpha\in \Lambda $, there exists a finite set $P_\alpha \subseteq A$ such that $A\subseteq P_\alpha+ X_{fin}^\alpha$. In general, we can not replace $X_{fin}^\alpha$ by $X_{fin}$.\end{remark}
 
 \begin{example}
 	Let $X=C_{00}= \text{span} \{e_n:n\in\mathbb{N}\}$, where $e_n=(0,...,1,0,...)$. For each $n\in\mathbb{N}$, define 
 	$\rho_n:X\rightarrow [0,\infty]$ by \[\rho_n\left((x^i) \right) =
 	\begin{cases}
 	\text{$\infty,$} &\quad\text{if $x^n\ne 0$}\\
 	\text{$\sup_{i\in\mathbb{N}}|x^i|$,} &\quad\text{if $x^n= 0$. }\\ 
 	\end{cases} \] 
 	If $\tau$ is the topology induced by the collection $\{\rho_n: n\in\mathbb{N}\}$ of extended seminorms, then $\left(X,\tau \right)$ is an elcs and $X_{fin}=\{0_X\}$. Let $A=\{e_n:n\in\mathbb{N}\}$, then $A$ is bounded in $\left( X,\tau\right) $  and  $A\nsubseteq P+X_{fin}$ for any finite set $P.$ \end{example}
 
 \begin{proposition}\label{extended norm by minkowski functional}  Let $(X,\tau)$ be an elcs. Then the following statements hold.
 	\begin{itemize}
 		\item[(i)]  If $A$ is an absolutely convex and bounded subset of $X$, then $A\subseteq X_{fin}$. 
 		\item[(ii)]  If $U$ is an absolutely convex and bounded neighborhood of $0_X$ in $(X,\tau)$, then the Minkowski functional $\rho_U$ is a continuous extended norm on $X$, and the finite subspace $X_{fin}$ of $(X,\tau)$ is equal  to $\{x\in X:\rho_{U}(x)<\infty\}$.
 \end{itemize}\end{proposition}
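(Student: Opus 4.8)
The plan is to establish (i) first and then deduce (ii) from it together with the recalled properties of Minkowski functionals and of bounded sets. For (i), I would invoke the characterization $X_{fin}=\{x\in X:\rho(x)<\infty\ \text{for all}\ \rho\in S(X,\tau)\}$ from the preliminaries, so that it suffices to fix an arbitrary continuous extended seminorm $\rho$ and prove $\rho(a)<\infty$ for every $a\in A$. Writing $U_\rho=\{x\in X:\rho(x)<1\}$, a neighborhood of $0_X$, the boundedness of $A$ (Definition \ref{bouded sets}) supplies a constant $c>0$ and a finite set $P\subseteq A$ with $A\subseteq P+cU_\rho$.

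The key step, which I expect to be the main obstacle, is to rule out $\rho(a)=\infty$ using only the finiteness of $P$ and the balancedness of $A$. Since $A$ is balanced, $ta\in A$ for every $t\in(0,1]$, so each such $ta$ can be written as $ta=p_{j(t)}+cu_t$ with $p_{j(t)}\in P$ and $\rho(u_t)<1$. If $\rho(a)=\infty$, then $\rho(ta)=\infty$ for $t>0$, and the triangle inequality $\infty=\rho(ta)\le\rho(p_{j(t)})+c$ forces $\rho(p_{j(t)})=\infty$ for every such $t$. Because $P$ is finite while $t$ ranges over the infinite set $(0,1]$, the pigeonhole principle produces $t_1\neq t_2$ with $p_{j(t_1)}=p_{j(t_2)}$; subtracting the two representations gives $a=\frac{c}{t_1-t_2}(u_{t_1}-u_{t_2})$, whence $\rho(a)\le\frac{2c}{|t_1-t_2|}<\infty$, a contradiction. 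This yields $\rho(a)<\infty$ for all $a\in A$ and hence $A\subseteq X_{fin}$.

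For (ii), the Minkowski functional $\rho_U$ is an extended seminorm, and since $U$ is a neighborhood of $0_X$ it is continuous (the facts recorded after Definition \ref{Minkowski funcitonal}). To upgrade it to an extended norm I would check the separation axiom: if $\rho_U(x)=0$, then for every scalar $\lambda$ one has $\rho_U(\lambda x)=|\lambda|\rho_U(x)=0<1$, so $\lambda x\in U$ (by the inclusion $\{\rho_U<1\}\subseteq U$); thus the whole line $\mathbb{K}x\subseteq U$ is a bounded subspace, which by the fact that no nonzero subspace is bounded forces $x=0_X$. Finally, for the identity $X_{fin}=\{x:\rho_U(x)<\infty\}$, the inclusion $\subseteq$ is immediate since $\rho_U\in S(X,\tau)$, and for the reverse inclusion I would note that $\rho_U(x)<\infty$ gives $x\in\lambda U$ for some $\lambda>0$; part (i) places the absolutely convex bounded set $U$ inside $X_{fin}$, and since $X_{fin}$ is a subspace, $x=\lambda u\in X_{fin}$. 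This completes the argument.
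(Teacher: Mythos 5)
Your proposal is correct, and in part (i) it is essentially the paper's own argument with the key step spelled out: the paper likewise fixes $\rho\in S(X,\tau)$ with $\rho(a)=\infty$, takes the neighborhood $U=\rho^{-1}([0,1))$, and asserts that the segment $\{ta: t\in[0,1]\}\subseteq A$ admits no finite set $P$ with $\{ta: t\in[0,1]\}\subseteq P+cU$, hence is an unbounded subset of the bounded set $A$ (contradicting fact (b)); your pigeonhole-and-subtraction computation is exactly the justification that the paper leaves implicit (your intermediate deduction that $\rho(p_{j(t)})=\infty$ is never used afterwards, but it is harmless). Part (ii) is where you genuinely diverge. To show that $\rho_U(x)=0$ forces $x=0_X$, the paper goes through duality: it invokes Corollary \ref{separation_of_points_by_linear_functional} (available under the section's standing Hausdorff assumption) to produce $f\in X^*$ with $f(x)\neq 0$, notes that $\{nx:n\in\mathbb{N}\}\subseteq U$, and concludes that $f(U)$ is unbounded, contradicting the boundedness of $U$ via fact (c). You instead observe that $\rho_U(x)=0$ places the whole line $\mathbb{K}x$ inside $U$, so facts (b) and (e) (no nonzero subspace is bounded) give $x=0_X$ directly. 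Your route is more elementary, avoiding continuous linear functionals altogether; note that both arguments still lean on Hausdorffness, since fact (e) fails without it (the closure of $\{0_X\}$ is always a bounded subspace). For the identity $X_{fin}=\{x\in X:\rho_U(x)<\infty\}$ the two proofs agree in substance: one inclusion comes from $U\subseteq X_{fin}$ via part (i) and the fact that $X_{fin}$ is a subspace; for the other, you cite the characterization $X_{fin}=\{x\in X:\rho(x)<\infty \text{ for all } \rho\in S(X,\tau)\}$, while the paper equivalently uses that $X_{fin}$ is contained in the open subspace $\{x\in X:\rho_U(x)<\infty\}$.
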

 
 \begin{proof}(i). Observe that for any $x \in A$, the line segment $\{tx:t\in[0,1]\}$ joining the point $x$ and $0_X$ is contained in $A$. If $x\in A\setminus X_{fin}$, that is,  $\rho(x) = \infty$ for some $\rho \in S(X,\tau)$, then for $U = \rho^{-1}[0,1)$ and any $c >0 $, there does not exist a finite subset $P$ of $\{tx:t\in[0,1]\}$ such that $\{tx:t\in[0,1]\} \subseteq P+ cU$. Consequently, $\{tx:t\in[0,1]\}$ is an unbounded subset of $A$. We arrive at a contradiction. 
 	
 (ii). Suppose $U$ satisfies the given hypothesis. We show that the Minkowski functional $\rho_{U}$ is a continuous extended norm on $(X,\tau)$. We only need to show if  $\rho_{U}(x)=0$, then $x=0_X$. By contradiction, suppose $\rho_{U}(x)=0$ but $x\neq 0_X$. By Corollary \ref{separation_of_points_by_linear_functional}, there exists $f\in X^*$ such that $f(x)\neq 0$. Since $\rho_{U}(x)=0$, $\{nx:n\in\mathbb{N}\}\subseteq U$. Therefore $f(U)$ is not bounded. Which contradicts that $U$ is bounded.  
 	
As $U$ is absolutely convex and bounded, by (i),  $U\subseteq X_{fin}$. Consequently, $\{x\in X:\rho_{U}(x)<\infty\}\subseteq X_{fin}$. Since $X_{fin}$ is contained in every open subspace of $(X,\tau)$, we have $X_{fin}=\{x\in X:\rho_{U}(x)<\infty\}.$  \end{proof}
 
 \begin{lemma}\label{equivalence of two elcs} Let $\tau_1$ and $\tau_2$ be two extended locally convex topologies on a vector space $X$ and let $M$ be an open subspace of $(X,\tau_1)$ and $(X,\tau_2)$. If $\tau_1$ and $\tau_2$ are equal on $M$, then $\tau_1$ and $\tau_2$ are equal on $X$. 
 \end{lemma}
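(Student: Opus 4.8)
The plan is to exploit that, by definition, an elcs topology is a group topology; hence each of $\tau_1,\tau_2$ is completely determined by its neighborhood filter at $0_X$ through translations. The whole argument reduces to showing that $(X,\tau_1)$ and $(X,\tau_2)$ share the same neighborhood filter at $0_X$. The central observation is that when $M$ is an open subspace of $(X,\tau_i)$, the $\tau_i$-neighborhoods of $0_X$ that happen to lie inside $M$ already form a neighborhood base at $0_X$, and such a set is a $\tau_i$-neighborhood of $0_X$ in $X$ exactly when it is a neighborhood of $0_X$ in the subspace $(M,\tau_i|_M)$.

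First I would record the openness fact. For $i\in\{1,2\}$ and any $\tau_i$-neighborhood $U$ of $0_X$, the intersection $U\cap M$ is again a $\tau_i$-neighborhood of $0_X$ lying in $M$ (because $M$ is $\tau_i$-open), so the neighborhoods contained in $M$ are cofinal in the neighborhood filter, i.e.\ they form a base at $0_X$. Conversely, a set $U\subseteq M$ is a $\tau_i$-neighborhood of $0_X$ in $X$ if and only if it contains a $\tau_i$-open set about $0_X$; since $M$ is $\tau_i$-open, a $\tau_i$-open subset of $M$ is the same thing as an open set of the subspace $(M,\tau_i|_M)$, and vice versa. Therefore the $\tau_i$-neighborhoods of $0_X$ contained in $M$ coincide exactly with the neighborhoods of $0_X$ in $(M,\tau_i|_M)$.

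Next I would invoke the hypothesis $\tau_1|_M=\tau_2|_M$, which says the neighborhood filters of $0_X$ in $(M,\tau_1|_M)$ and $(M,\tau_2|_M)$ are identical. Combined with the previous step, this means $(X,\tau_1)$ and $(X,\tau_2)$ admit one and the same neighborhood base at $0_X$, namely the common collection of neighborhoods of $0_X$ contained in $M$; consequently they have the same neighborhood filter $\mathcal{N}(0_X)$. Finally, for a group topology one has $W\in\tau_i$ if and only if for every $x\in W$ there is $N\in\mathcal{N}(0_X)$ with $x+N\subseteq W$. Since $\mathcal{N}(0_X)$ is shared by $\tau_1$ and $\tau_2$, the two topologies have exactly the same open sets, so $\tau_1=\tau_2$ on $X$.

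I do not anticipate a serious obstacle here: the only point requiring genuine care is the elementary verification in the first step that openness of $M$ makes the ``in-$M$'' neighborhoods simultaneously a base at $0_X$ and identical to the subspace neighborhoods. Everything after that is the standard principle that a group topology is recovered from its neighborhood filter at the identity, for which the full elcs (local convexity) structure is not even needed.
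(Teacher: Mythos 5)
Your proof is correct. Note that the paper actually states this lemma without any proof (it is treated as routine), so there is nothing to compare against; your argument is the natural one that the authors implicitly rely on: since each $\tau_i$ is a group topology, it is determined by its neighborhood filter at $0_X$; openness of $M$ in $(X,\tau_i)$ makes the $\tau_i$-neighborhoods of $0_X$ lying inside $M$ both a base of that filter and identical with the neighborhoods of $0_X$ in $(M,\tau_i|_M)$; and the hypothesis $\tau_1|_M=\tau_2|_M$ then forces the two filters, hence the two topologies, to coincide. Your closing remark is also a genuine (if mild) strengthening worth keeping: local convexity plays no role, so the lemma holds for arbitrary group topologies on $X$ sharing a common open subspace.
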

 
 \begin{theorem} \label{Extended normability of elcs}
 	Suppose an  elcs $(X, \tau)$ has a bounded neighborhood of $0_X$. Then there exists an extended norm $\parallel\cdot\parallel$  on $X$ with $ X_{fin}=\{x\in X:\parallel x\parallel<\infty\}$ and the topology  $\tau_{\parallel\cdot\parallel}$ induced by the extended norm  $\parallel\cdot\parallel$   is equal to $\tau$.
 \end{theorem}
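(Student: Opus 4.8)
The plan is to realize the desired extended norm as the Minkowski functional of a bounded absolutely convex neighborhood of $0_X$, and then to deduce that it induces $\tau$ by comparing the two topologies on the finite subspace. First I would use Proposition \ref{base for elcs} to replace the given bounded neighborhood $U$ of $0_X$ by an absolutely convex neighborhood $V\subseteq U$; since subsets of bounded sets are bounded (fact (b) above), $V$ is again bounded. Proposition \ref{extended norm by minkowski functional}(ii) then applies directly: the Minkowski functional $\rho_V$ is a continuous extended norm on $X$, and $X_{fin}=\{x\in X:\rho_V(x)<\infty\}$. Setting $\parallel\cdot\parallel=\rho_V$ gives an extended norm with the required finite subspace, and continuity of $\rho_V$ with respect to $\tau$ yields $\tau_{\parallel\cdot\parallel}=\tau(\rho_V)\subseteq\tau$ immediately.

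The reverse inclusion $\tau\subseteq\tau_{\parallel\cdot\parallel}$ is where the work lies, and I would obtain it from Lemma \ref{equivalence of two elcs} applied to the open subspace $M=X_{fin}$. To invoke the lemma I must check two things. First, $X_{fin}$ is open for both topologies: by Proposition \ref{extended norm by minkowski functional}(i) the bounded absolutely convex set $V$ is contained in $X_{fin}$, so $X_{fin}$ is a subspace containing a $\tau$-neighborhood of $0_X$ and is therefore $\tau$-open; likewise $\{x\in X:\rho_V(x)<1\}\subseteq X_{fin}$ shows that $X_{fin}$ contains a $\tau_{\parallel\cdot\parallel}$-neighborhood of $0_X$ and is $\tau_{\parallel\cdot\parallel}$-open. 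Second, I must show that $\tau$ and $\tau_{\parallel\cdot\parallel}$ restrict to the same topology on $X_{fin}$.

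This last point is the heart of the argument and the main obstacle. On $X_{fin}$ the extended seminorm $\rho_V$ is finite-valued, hence an ordinary norm, and (since $V\subseteq X_{fin}$) the set $V=V\cap X_{fin}$ is a bounded neighborhood of $0$ in the locally convex space $(X_{fin},\tau|_{X_{fin}})$. By Kolmogorov's normability criterion, $(X_{fin},\tau|_{X_{fin}})$ is normable, and the norm may be taken to be the gauge $\rho_V$ of this bounded neighborhood; thus $\tau|_{X_{fin}}=\tau(\rho_V)|_{X_{fin}}=\tau_{\parallel\cdot\parallel}|_{X_{fin}}$. With both topologies agreeing on the open subspace $X_{fin}$, Lemma \ref{equivalence of two elcs} forces $\tau=\tau_{\parallel\cdot\parallel}$ on all of $X$, completing the proof. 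It is precisely this normability step that converts the bornological hypothesis (existence of a bounded neighborhood) into the identification of $\tau$ on $X_{fin}$ with a single-seminorm topology, and it is the only place where boundedness is used in an essential, quantitative way; the Hausdorff assumption in force throughout this section is what makes Kolmogorov's criterion applicable.
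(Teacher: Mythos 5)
Your proof is correct and follows essentially the same route as the paper's: pass to a bounded absolutely convex neighborhood $V\subseteq U$, take $\parallel\cdot\parallel=\rho_V$ via Proposition \ref{extended norm by minkowski functional}, identify $\tau|_{X_{fin}}$ with the norm topology using the bounded neighborhood $V$ (the paper cites Theorem 6.2.1 of \cite{tvsnarici}, which is precisely the Kolmogorov normability criterion you invoke), and conclude with Lemma \ref{equivalence of two elcs} applied to the open subspace $X_{fin}$. The only difference is expository: you spell out the openness checks and the role of the Hausdorff hypothesis, which the paper leaves implicit.
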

 
 \begin{proof}
 	Suppose $U$ is a bounded neighborhood of $0_X$ in $(X,\tau)$. Then there exists a bounded absolutely convex neighborhood $V$ of $0_X$ such that $ V\subseteq U$. Consequently, by Proposition \ref{extended norm by minkowski functional}, $V\subseteq X_{fin}$ and  the Minkowski functional $\rho_V$ is an extended norm on $X$ with $X_{fin}=\{x\in X: \rho_{V}(x)<\infty \}$. Define  $\parallel x\parallel =\rho_V(x)$ for all $x \in X$. Since $V$ is a bounded neighborhood of $0_X$ in the locally convex space $(X_{fin},\tau|_{X_{fin}})$, the topology induced by $\parallel\cdot\parallel$ coincides with  $\tau|_{X_{fin}}$ on $X_{fin}$ (see, Theorem 6.2.1 of \cite{tvsnarici}). As  $X_{fin}$ is an open subspace of both $(X,\tau)$ and $(X,\parallel\cdot\parallel)$, by Lemma \ref{equivalence of two elcs}, $\tau_{\parallel\cdot\parallel}$ is equal to $\tau$.\end{proof}        
 
 
We now study the metrizability of $\tau_{F}$.  Recall that an elcs $(X,\tau)$ is metrizable if and only if $(X,\tau)$ is a countable elcs, that is, it has a countable neighborhood base of $0_X$. However, the metrizability of $(X,\tau)$ does not imply the metrizability of the space $(X,\tau_F)$ (see, Example \ref{tau_is_metrizable_but_tau_F_is_not_metrizable}). 

\begin{proposition}{\normalfont(Theorem 5.6.2, \cite{tvsnarici})} \label{discontinuous linear functional on infinite dimensional metrizable lcs}If $X$ is an infinite dimensional metrizable locally convex space, then there exists a discontinuous  
 	linear functional on $X$. 
\end{proposition}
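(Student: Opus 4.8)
The plan is to exploit the fact that a metrizable locally convex space is first countable, so that continuity of a linear functional is equivalent to sequential continuity; it then suffices to exhibit a sequence $(x_n)$ converging to $0_X$ together with a linear functional $f$ for which $f(x_n)\not\to 0$. The infinite dimensionality of $X$ is exactly what will make such a pathological pairing available.

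First I would fix a countable, decreasing neighborhood base $\{V_n : n\in\mathbb{N}\}$ of $0_X$, which exists because $X$ is metrizable. Using that $X$ is infinite dimensional, I would construct inductively a sequence $(x_n)$ of linearly independent vectors with $x_n\in V_n\setminus\{0_X\}$: having chosen linearly independent $x_1,\dots,x_{n-1}$, their span is a proper (finite dimensional) subspace, so I may pick $v\in X$ outside this span; since $V_n$ is a neighborhood of $0_X$, it is absorbing, whence some small multiple $\lambda v$ lies in $V_n$, and $\lambda v$ remains independent from $x_1,\dots,x_{n-1}$ because $\lambda\neq 0$. Setting $x_n=\lambda v$ continues the induction. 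Because $\{V_n\}$ is a decreasing base and $x_m\in V_m$ for all $m$, the sequence $(x_n)$ converges to $0_X$.

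Next I would extend the linearly independent set $\{x_n : n\in\mathbb{N}\}$ to a Hamel basis $B$ of $X$ and define a linear functional $f\colon X\to\mathbb{K}$ by setting $f(x_n)=n$ for each $n$ and $f(b)=0$ for every $b\in B\setminus\{x_n : n\in\mathbb{N}\}$. This determines $f$ uniquely and linearly on all of $X$. Then $x_n\to 0_X$ while $f(x_n)=n\to\infty$, so $f$ is not sequentially continuous at $0_X$; since $X$ is metrizable, hence first countable, $f$ is therefore not continuous.

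The main obstacle is the inductive construction of a null sequence of linearly independent vectors. This is precisely where infinite dimensionality is indispensable: in a finite dimensional space the span of the previously chosen vectors would eventually exhaust $X$, so the induction would stall, reflecting the familiar fact that every linear functional on a finite dimensional Hausdorff topological vector space is automatically continuous. Once the sequence is in hand, the remaining verifications --- that $f$ is well defined and linear via the Hamel basis, and that failure of sequential continuity forces genuine discontinuity in a first countable space --- are routine.
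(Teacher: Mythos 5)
Your proof is correct and is the standard argument for this classical fact; the paper itself gives no proof, merely citing Theorem 5.6.2 of \cite{tvsnarici}, whose proof proceeds exactly as yours does (countable decreasing neighborhood base, inductively chosen linearly independent null sequence $x_n\in V_n$, and a Hamel-basis functional with $f(x_n)=n$). One minor simplification: first countability is not actually needed in the final step, since continuity always implies sequential continuity, so $x_n\to 0_X$ with $f(x_n)=n\not\to 0$ already contradicts continuity; metrizability is used only to obtain the countable decreasing base.
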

 
 \begin{theorem}\label{metrizability of the flcs}
 	Suppose $(X,\tau)$ is a countable elcs. Then the following statements are equivalent.
 	\begin{itemize}
 		\item[(a)] The space $(X,\tau_F)$ is metrizable.
 		\item[(b)] The dimension of the quotient space $X/ X_{fin}^\rho$ is finite for each $\rho\in S(X,\tau)$.
 \end{itemize}  \end{theorem}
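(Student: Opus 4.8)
The plan is to realize both conditions in terms of a single fixed countable absolutely convex neighbourhood base and then to play off finite- against infinite-dimensionality of the complementary subspaces. First I would fix a countable neighbourhood base $\{V_n : n \in \mathbb{N}\}$ of $0_X$ consisting of absolutely convex sets, which is possible since $(X,\tau)$ is countable and, by Proposition \ref{base for elcs}, admits an absolutely convex base; then $\tau$ is generated by the Minkowski functionals $\{\rho_{V_n}\}$ and $\tau_F$ is generated by $F = \bigcup_n F_{V_n}$ as in Theorem \ref{construction}. Since each $\rho_{V_n} \in S(X,\tau)$ with $X_{fin}^{\rho_{V_n}} = X_{fin}^{V_n}$, and since every $\rho \in S(X,\tau)$ satisfies $\rho \le \rho_{V_n}$ for some $n$ (apply the argument of Lemma \ref{condition for a cotinuous linear functional} to $\rho^{-1}[0,1)$), giving $X_{fin}^{V_n} \subseteq X_{fin}^{\rho}$ and hence $\dim(X/X_{fin}^\rho) \le \dim(X/X_{fin}^{V_n})$, condition (b) is equivalent to requiring $\dim(X/X_{fin}^{V_n}) < \infty$ for every $n$. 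Because $(X,\tau)$ is Hausdorff, so is $(X,\tau_F)$ by Proposition \ref{Hausdorff condition for the flcs}; thus (a) is equivalent to $\tau_F$ having a countable neighbourhood base of $0_X$.

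For (b)$\Rightarrow$(a), I would exhibit a countable subfamily of $F$ that already generates $\tau_F$. For each $n$ write $X = X_{fin}^{V_n} \oplus M_n$ with $\dim M_n < \infty$, fix any norm $\sigma_n$ on $M_n$, and set $\rho_n'(x) = \rho_{V_n}(x_f) + \sigma_n(x_M)$ for $x = x_f + x_M$. Each $\rho_n'$ is a finite seminorm agreeing with $\rho_{V_n}$ on $X_{fin}^{V_n}$, so $\rho_n' \in F_{V_n} \subseteq F$. The key point is that a single $\rho_n'$ dominates the entire family $F_{V_n}$: given $\rho \in F_{V_n}$ one has $\rho \le C_\rho \rho_{V_n}$ on $X$ (exactly as in the proof of Theorem \ref{construction}), while on the finite-dimensional space $M_n$ the seminorm $\rho$ is bounded by a multiple of $\sigma_n$; combining these through the triangle inequality yields $\rho \le C\rho_n'$ on all of $X$ for a suitable constant $C$. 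Hence every member of $F$ is continuous for the topology of the countable family $\{\rho_n'\}$, so this family generates $\tau_F$ and $(X,\tau_F)$ is metrizable.

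For (a)$\Rightarrow$(b) I would argue by contraposition, assuming $\dim(X/X_{fin}^{V_N}) = \infty$ for some $N$. Choose a sequence $z_k \in M_N$ whose images in $X/X_{fin}^{V_N}$ are linearly independent, and let $Y = \operatorname{span}\{z_k : k \in \mathbb{N}\}$, an infinite-dimensional subspace with $Y \cap X_{fin}^{V_N} = \{0\}$. The crux is that every linear functional on $Y$ is $\tau_F|_Y$-continuous: given such an $h$, extend it to a linear functional $\tilde h$ on $X$ that vanishes on $X_{fin}^{V_N}$; then $p(x) = \rho_{V_N}(x_f) + |\tilde h(x)|$ is a finite seminorm equal to $\rho_{V_N}$ on $X_{fin}^{V_N}$, so $p \in F_{V_N}$ and $p$ is $\tau_F$-continuous by the construction of $\tau_F$, whence $|\tilde h| \le p$ makes $\tilde h \in (X,\tau_F)^*$ and its restriction $h$ continuous on $Y$. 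If $(X,\tau_F)$ were metrizable, then $(Y,\tau_F|_Y)$ would be an infinite-dimensional metrizable locally convex space on which every linear functional is continuous, contradicting Proposition \ref{discontinuous linear functional on infinite dimensional metrizable lcs}. Thus $\tau_F$ is not metrizable, completing the contrapositive.

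The main obstacle, in both directions, is controlling the seminorms in $F$ through the dimension of the complement $M_n \cong X/X_{fin}^{V_n}$: in (b)$\Rightarrow$(a) finiteness of $\dim M_n$ is exactly what allows a single seminorm $\rho_n'$ to absorb the whole family $F_{V_n}$, and in (a)$\Rightarrow$(b) infinite dimension is what produces enough continuous functionals on a subspace to violate first countability via Proposition \ref{discontinuous linear functional on infinite dimensional metrizable lcs}.
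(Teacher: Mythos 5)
Your proposal is correct and takes essentially the same approach as the paper: for (b)$\Rightarrow$(a) the paper likewise builds the countable family of seminorms $\rho_n'(x)=\rho_{U_n}(x_f)+\sum_{j\in\Delta_n}|\alpha_j|$ from the finite-dimensional complements and shows it generates $\tau_F$, and for (a)$\Rightarrow$(b) it likewise applies Proposition \ref{discontinuous linear functional on infinite dimensional metrizable lcs} to an infinite-dimensional complement of $X_{fin}^\rho$, exploiting the fact that a linear functional vanishing on $X_{fin}^\rho$ is automatically continuous. Your departures are only presentational: you argue by contraposition rather than contradiction, you dominate the members of $F$ directly instead of an arbitrary $\tau_F$-continuous seminorm via Lemma \ref{condition for a cotinuous linear functional}, and you certify continuity of the extended functional through an explicitly constructed seminorm $p\in F_{V_N}$ where the paper invokes Corollary \ref{finest and original topology have same dual}.
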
	
 
 \begin{proof}(a)$\Rightarrow$(b). Suppose $\tau_{F}$ is metrizable and  $\dim\left(X/X_{fin}^\rho \right) $ is not finite for some $\rho\in S(X,\tau)$. So there exists an infinite dimensional subspace $M$ of $X$ such that $X = X_{fin}^{\rho} \oplus M$. By Proposition \ref{discontinuous linear functional on infinite dimensional metrizable lcs}, there exists a discontinuous linear functional $\phi$ on $(M,\tau_{F}|_M)$. If we extend $\phi$ linearly on $X$ so that $\phi\left( X_{fin}^\rho\right)=0$, then $\phi$ cannot be continuous on $(X,\tau_{F})$. Since $\phi\left( X_{fin}^\rho \right)=0 $, we have $\phi\in (X,\tau)^* = (X,\tau_F)^*$.  We arrive at a contradiction.
 	
 	(b)$\Rightarrow$(a). Suppose $\mathfrak{B}=\{U_n:n\in \mathbb{N}\}$ is a  countable neighborhood base  of $0_X$ for $(X,\tau)$ such that each element of $\mathfrak{B}$ is absolutely convex.  For each $n\in\mathbb{N}$, there exists a finite dimensional subspace $M_n$ such that $X=X_{fin}^{U_n}\oplus M_n$, where $X_{fin}^{U_n} = \{x\in X: \rho_{U_n}(x) < \infty\}$. Let $\{z_j:j\in\Delta_n\}$ be a basis for $M_n$ and let $\rho'_n:X\rightarrow[0,\infty)$ be a seminorm defined by $$\rho'_n\left(x \right) =\rho_{U_n}(x_f)+\sum_{j\in\Delta_n}|\alpha_j|,$$
 	where $x=x_f+\sum_{j\in\Delta_n}\alpha_jz_j$. Since $\rho'_n\leq \rho_{U_n}$, $\rho_n'$ is a continuous seminorm on $(X,\tau)$. By Theorem \ref{finest and original topology have same seminorm},  $\rho'_n$ is continuous on $(X,\tau_F)$.
 	
 	If $\sigma$ is the locally convex topology on $X$ induced by the collection $\mathcal{P'}=\{\rho_n':n\in\mathbb{N}\}$, then $\sigma\subseteq \tau_F\subseteq \tau$. If $\rho$ is a continuous seminorm on $(X,\tau_F)$, then by Lemma \ref{condition for a cotinuous linear functional}, there exists an $n_0 \in \mathbb{N}$ such that $\rho\leq \rho_{U_{n_0}}$. Thus $\rho\leq \rho'_{n_0}$. Hence $\rho$ is continuous with respect to $\sigma$. \end{proof}
 
\begin{corollary}\label{metrizability of flc in enls}
Suppose $X$ is an enls. Then the following statements are equivalent.
\begin{itemize}
	\item[(a)] The finest locally convex space $(X,\tau_F)$ is metrizable.
	\item[(b)] $X$ is almost conventional, that is, the dimension of $X/X_{fin}$ is finite. 
\end{itemize}	
\end{corollary}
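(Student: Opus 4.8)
The plan is to derive this corollary directly from Theorem \ref{metrizability of the flcs} by identifying the relevant structural facts about an extended normed linear space. The key observation is that for an enls $(X,\parallel\cdot\parallel)$, the extended norm topology is a countable elcs, since the collection $\{B(0_X,1/n):n\in\mathbb{N}\}$ of open balls (which are absolutely convex) forms a countable neighborhood base of $0_X$. Thus Theorem \ref{metrizability of the flcs} applies, and it suffices to show that condition (b) of that theorem is equivalent to the finite-dimensionality of $X/X_{fin}$.

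The main step is to analyze the collection $S(X,\tau)$ of continuous extended seminorms for an enls and relate the spaces $X_{fin}^\rho$ to $X_{fin}$. First I would recall that for an enls the finite subspace satisfies $X_{fin}=\{x\in X:\parallel x\parallel<\infty\}$, and that the extended norm $\parallel\cdot\parallel$ is itself a continuous extended seminorm with $X_{fin}^{\parallel\cdot\parallel}=X_{fin}$. Taking $\rho=\parallel\cdot\parallel$ in condition (b) shows that metrizability of $(X,\tau_F)$ forces $\dim(X/X_{fin})$ to be finite, giving the implication (a)$\Rightarrow$(b). For the converse, I would argue that every continuous extended seminorm $\rho$ on $(X,\parallel\cdot\parallel)$ satisfies $X_{fin}\subseteq X_{fin}^\rho$: indeed, continuity of $\rho$ yields a constant $C>0$ with $\rho\leq C\parallel\cdot\parallel$ on the open subspace $X_{fin}$ (by an argument parallel to Lemma \ref{condition for a cotinuous linear functional}, restricted to where the extended norm is finite), so $\rho$ is finite wherever $\parallel\cdot\parallel$ is finite. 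Since $X_{fin}\subseteq X_{fin}^\rho$, the quotient $X/X_{fin}^\rho$ is a quotient of $X/X_{fin}$, and hence finite-dimensional whenever $X/X_{fin}$ is. This establishes condition (b) of Theorem \ref{metrizability of the flcs} for every $\rho\in S(X,\tau)$, giving (b)$\Rightarrow$(a).

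The delicate point, and the step I expect to require the most care, is the claim that $X_{fin}\subseteq X_{fin}^\rho$ for every continuous extended seminorm $\rho$ on an enls. The subtlety is that continuity of $\rho$ with respect to the extended norm topology only guarantees boundedness of $\rho$ on a neighborhood of $0_X$, and neighborhoods in an enls need not be absorbing; one must use that $X_{fin}$ is the smallest open subspace and that on $X_{fin}$ the topology is an honest locally convex (indeed normed) topology, so that continuity translates into a genuine domination $\rho\leq C\parallel\cdot\parallel$ on $X_{fin}$. Once this domination is in hand, everything else is a routine application of the cited theorem. I would therefore organize the proof so that this containment is isolated as the crux, with the two implications following immediately from it together with Theorem \ref{metrizability of the flcs}.
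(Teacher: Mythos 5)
Your proof is correct and is essentially the derivation the paper intends: the corollary is stated without a separate proof precisely because it follows from Theorem \ref{metrizability of the flcs} in the way you describe (an enls is a countable Hausdorff elcs, and condition (b) of that theorem reduces to finite-dimensionality of $X/X_{fin}$). The one remark worth making is that your ``delicate point,'' the containment $X_{fin}\subseteq X_{fin}^{\rho}$ for every $\rho\in S(X,\tau)$, need not be re-derived: the paper's preliminaries already record that $X_{fin}=\{x\in X:\rho(x)<\infty \text{ for all } \rho\in S(X,\tau)\}$ for any elcs, so this step is available by direct citation.
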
 
 
 In general, we can neither replace $X_{fin}^\rho$ with $X_{fin}$ nor can drop the condition $\dim\left( X/X_{fin}^\rho\right) $ is finite for each $\rho\in S(X,\tau)$ in Theorem \ref{metrizability of the flcs}. 
 
 \begin{example}
 	Suppose $X=C_{0}$ is the collection of all real sequences which converge to $0$. For each $n\in\mathbb{N}$, define $\rho_n:X\rightarrow [0,\infty]$ by
 	\[\rho_n((x^i))=
 	\begin{cases}
 	\text{$\infty$,} &\quad\text{ if $x^i\neq 0$  for some $1\leq i\leq n$}\\
 	\text{$\sup|x^i|$,} &\quad\text{if $x^i= 0$ for  $1\leq i\leq n$.}\\
 	\end{cases}
 	\]
 	Suppose $\tau$ is the topology induced by the collection $\mathcal{P}=\{\rho_n:n\in\mathbb{N} \}$ of extended norms. Then $(X,\tau)$ is an elcs and $X_{fin}=\{0_X\}$. We show that the supremum  norm topology $\tau_{\parallel\cdot\parallel_\infty}$ is equal to $\tau_{F}$. Since $\parallel\cdot\parallel_\infty\leq \rho_1$, we  have  $\tau_{\parallel\cdot\parallel_\infty}\subseteq \tau$. By Theorem \ref{construction}, $\tau_{\parallel\cdot\parallel_\infty}\subseteq \tau_{F}.$ 
 	
 	Conversely, let $\rho$ be a continuous seminorm on $(X,\tau_{F})$. Since $\rho_n\leq \rho_{n+1}$ for each $n$, by Proposition 4.4 of \cite{esaetvs}, there exist  $n_0\in\mathbb{N}$  and $c>1$ such that $\rho\leq c\rho_{n_0}$. Let $X=X_{fin}^{\rho_{n_0}}\oplus M$. Consider the norm $\trinorm\cdot\trinorm:X\to [0,\infty)$ defined by $$\trinorm x\trinorm=\parallel x_f\parallel_\infty + \parallel x_M\parallel_\infty+ \rho(x_M),$$ where $x=x_f+x_M$ for $x_f\in X_{fin}^{\rho_{n_0}}$ and $x_M\in M.$ Then both $\trinorm\cdot\trinorm$ and $\parallel\cdot\parallel_\infty$ are finitely compatible norms for $(X, \rho_{n_0})$ as $\parallel \cdot\parallel_\infty=\trinorm \cdot\trinorm= \rho_{n_0}$ on $X_{fin}^{\rho_{n_0}}$. Since $(X, \rho_{n_0})$ is an extended Banach space and the dimension of the quotient space $ X/X_{fin}^{\rho_{n_0}}$ is finite, by Corollary 2.8 of \cite{spoens}, all finitely compatible norms for $(X,\rho_{n_0})$ are equivalent. Therefore $\trinorm\cdot\trinorm$ is equivalent to $\parallel\cdot\parallel_{\infty}$. It is easy to see that $\rho\leq c\trinorm\cdot \trinorm$ on $X$. Consequently, $\rho$ is continuous with respect to $\trinorm\cdot\trinorm$, so continuous with respect to $\parallel\cdot\parallel_\infty.$ Hence $\tau_{F}\subseteq \tau_{\parallel\cdot\parallel_\infty}.$ \end{example}
 
  \begin{example}\label{tau_is_metrizable_but_tau_F_is_not_metrizable}
 	Suppose $X$ is infinite dimensional and $\parallel\cdot\parallel_{0,\infty}$ is the discrete extended norm on $X$. Then $(X,\parallel\cdot\parallel_{0,\infty})$ is an extended normed linear space, and $\dim (X/X_{fin})$ is infinite. So by Corollary \ref{metrizability of flc in enls}, the flc topology $\tau_F$ for $(X,\parallel\cdot\parallel_{0,\infty})$ is not metrizable. 
 \end{example}

 Finally, we study when $(X,\tau_F)$ is normable and give a condition on $(X,\tau)$ for which $(X,\tau_F)$ is barreled.
  
 \begin{definition}\label{Mackey topology}
 	\normalfont   	Suppose $(X,\tau)$ is a locally convex space. Then the Mackey topology $\tau_M$ on $X$ is defined by the family 
 	$$\mathcal{B}_M=\left\lbrace A_\circ: A~ \text{is weak}^* \text{compact subset of}~ X^* \text{and} \left(A_\circ \right)^\circ=A \right\rbrace, $$  
 	where  $A_\circ=\{x\in X: |f(x)|\leq 1 ~\forall f\in A \}$, and $(A_\circ)^\circ=\{f\in X^*: |f(x)|\leq 1 \forall ~ x\in A_\circ\}.$   
 \end{definition}
 
 If the topology $\tau=\tau_M$, then the locally convex space $(X,\tau)$ is  called a \textit{Mackey space}. For more details, see \cite{tvsnarici, lcsosborne, mmitvs}. 
 
 We recall the following results related to Mackey topology from \cite{tvsnarici}.
  
 \begin{proposition}\label{Mackey topology results}
 	Let $(X,\tau)$ be a locally convex space. Then the following statements hold. 
 	\begin{itemize}
 		\item[(i)] The Mackey topology  is the largest locally convex topology on $X$, preserving all continuous linear functionals of $X$ {\normalfont(Theorem 8.7.4, \cite{tvsnarici})}.
 		\item[(ii)] If $(X,\tau)$ is  a metrizable locally convex space, then the given topology $\tau$ is equal to its Mackey topology $\tau_M$ {\normalfont(Example 8.8.10, \cite{tvsnarici})}.
 		\item[(iii)] Both $(X,\tau)$ and $(X,\tau_M)$ have the same  bounded sets {\normalfont(Theorem 8.8.7, \cite{tvsnarici})} .
 	\end{itemize}
 \end{proposition}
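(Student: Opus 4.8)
The plan is to recognize that all three items are classical facts of Mackey--Arens duality theory for the dual pair $\langle X, X^*\rangle$ and to derive each from the standard polar calculus, ultimately quoting the precise results of \cite{tvsnarici} rather than reproving them. Since the proposition is stated only as a tool for the normability and barreledness results to follow, I would present it as a recollection, but let me indicate the underlying arguments.

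For (i), the statement is exactly the Mackey--Arens theorem. I would fix the dual pair $\langle X, X^*\rangle$ and use the bipolar theorem to show that a locally convex topology $\sigma$ on $X$ satisfies $(X,\sigma)^* = X^*$ if and only if $\sigma$ has a neighborhood base at $0_X$ consisting of polars $A_\circ$ of weak$^*$-compact absolutely convex subsets $A$ of $X^*$. The topology generated by \emph{all} such polars is precisely the Mackey topology $\tau_M$ of Definition \ref{Mackey topology}, and it is then the finest topology of the dual pair, which is the maximality assertion. For (ii), the key point is that a metrizable locally convex space is bornological and that every bornological locally convex space is a Mackey space: given a countable decreasing neighborhood base at $0_X$, one checks that any absolutely convex set absorbing every bounded set is already a $\tau$-neighborhood of $0_X$; since each $\tau_M$-neighborhood of $0_X$ is such a bornivorous set, this yields $\tau_M \subseteq \tau$, while $\tau \subseteq \tau_M$ follows from (i) because $\tau$ and $\tau_M$ share the dual $X^*$. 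Hence $\tau = \tau_M$.

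For (iii), I would use that boundedness is an invariant of the dual pair. Since $\tau \subseteq \tau_M$, every $\tau_M$-bounded set is $\tau$-bounded. Conversely, by Mackey's boundedness theorem a subset of $X$ is $\tau$-bounded if and only if it is weakly bounded, that is $\sigma(X,X^*)$-bounded, and the identical characterization applies to $\tau_M$ because $(X,\tau)^* = (X,\tau_M)^* = X^*$ by (i); therefore the two topologies have exactly the same bounded sets.

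The only genuine content lies in the bipolar theorem and Mackey's boundedness theorem that sit behind (i) and (iii); once these are granted, the three assertions are immediate. As these are the standard results located precisely at the cited places in \cite{tvsnarici}, the proof reduces to invoking those references, and no independent argument beyond the sketch above is needed.
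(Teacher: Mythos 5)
The paper offers no proof of this proposition at all: it is stated purely as a recollection of classical facts with inline citations to \cite{tvsnarici}, which is exactly what your proposal ultimately does, and your sketches of the underlying arguments (Mackey--Arens via the bipolar theorem for (i), metrizable $\Rightarrow$ bornological $\Rightarrow$ Mackey for (ii), and Mackey's boundedness theorem for (iii)) are the standard correct ones. The only cosmetic point is that your argument for (ii) invokes the boundedness invariance of (iii) (a $\tau_M$-neighborhood is bornivorous for $\tau$ only once one knows $\tau$-bounded sets are $\tau_M$-bounded), so the three items should be derived in the order (i), (iii), (ii) to avoid any appearance of circularity.
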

 
 \begin{theorem}\label{normability of the flcs}
 	Suppose $\left(X,\tau \right) $ is a countable elcs and $\parallel\cdot\parallel$ is a norm on $X$  such that $\left(X,\parallel\cdot\parallel \right)^*=\left( X,\tau\right)^*$. Then $\tau_{F}$ is induced by the norm $\parallel\cdot\parallel$.	
 \end{theorem}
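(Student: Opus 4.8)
The plan is to reduce the statement to the metrizability criterion of Theorem \ref{metrizability of the flcs} and then to identify $\tau_F$ with the Mackey topology of the dual pair $\langle X, X^*\rangle$, where $X^*$ denotes the common dual. First I would record that, by Corollary \ref{finest and original topology have same dual}, $(X,\tau_F)^* = (X,\tau)^* = (X,\parallel\cdot\parallel)^*$, and call this common dual $X^*$; thus $\tau_F$ and $\tau_{\parallel\cdot\parallel}$ are both locally convex topologies on $X$ compatible with the dual pair $\langle X, X^*\rangle$. Since the Mackey topology of Definition \ref{Mackey topology} is built only from $X^*$ (via weak$^*$-compact subsets and polars), the spaces $(X,\tau_F)$ and $(X,\parallel\cdot\parallel)$ have the \emph{same} Mackey topology, which I denote by $\tau_M$.

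The heart of the argument is to verify hypothesis (b) of Theorem \ref{metrizability of the flcs}: that $\dim\left(X/X_{fin}^\rho\right)$ is finite for every $\rho\in S(X,\tau)$. Fix such a $\rho$. Because $\rho$ is a continuous extended seminorm, the set $\{x\in X:\rho(x)<1\}$ is an open neighborhood of $0_X$ contained in the subspace $X_{fin}^\rho$, so $X_{fin}^\rho$ is an \emph{open} subspace of $(X,\tau)$; consequently the quotient $X/X_{fin}^\rho$ carries the discrete topology. Every linear functional on a discrete quotient is continuous, so every linear functional on $X$ that vanishes on $X_{fin}^\rho$ belongs to $(X,\tau)^* = X^*$ and hence is $\parallel\cdot\parallel$-bounded. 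Writing $X = X_{fin}^\rho \oplus M$ and extending functionals on $M$ by zero on $X_{fin}^\rho$, this says precisely that \emph{every} linear functional on the normed space $(M,\parallel\cdot\parallel)$ is bounded. By Proposition \ref{discontinuous linear functional on infinite dimensional metrizable lcs} this is impossible when $M$ is infinite dimensional, so $M$ must be finite dimensional; that is, $\dim\left(X/X_{fin}^\rho\right)=\dim M<\infty$.

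With (b) in hand and $(X,\tau)$ countable by hypothesis, Theorem \ref{metrizability of the flcs} gives that $(X,\tau_F)$ is metrizable. Finally I would apply Proposition \ref{Mackey topology results}(ii): a metrizable locally convex space coincides with its own Mackey topology, so $\tau_F=\tau_M$; applying the same proposition to the normed (hence metrizable) space $(X,\parallel\cdot\parallel)$ gives $\tau_{\parallel\cdot\parallel}=\tau_M$. Therefore $\tau_F=\tau_{\parallel\cdot\parallel}$, which is the assertion.

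I expect the main obstacle to be the finite-dimensionality step in the second paragraph. The two delicate points there are recognizing that $X_{fin}^\rho$ is an open subspace (so that the quotient is discrete and every functional vanishing on $X_{fin}^\rho$ is automatically $\tau$-continuous), and then turning the hypothesis $(X,\parallel\cdot\parallel)^*=(X,\tau)^*$ into the boundedness of all functionals on the complement $M$ to force $\dim\left(X/X_{fin}^\rho\right)<\infty$. Everything afterward is routine once the metrizability of $\tau_F$ is established and the Mackey topology is seen to depend only on the dual pair.
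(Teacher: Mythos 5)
Your proof is correct, but its second half takes a genuinely different route from the paper's. Both arguments open identically: by Corollary \ref{finest and original topology have same dual} the three spaces share one dual, and since the Mackey topology of Definition \ref{Mackey topology} depends only on the dual pair, $(X,\tau_F)$ and $(X,\parallel\cdot\parallel)$ carry the same Mackey topology $\tau_M$, with $\tau_{\parallel\cdot\parallel}=\tau_M$ by Proposition \ref{Mackey topology results}(ii). The paper then finishes with two separate inclusions: $\tau_F\subseteq\tau_M=\tau_{\parallel\cdot\parallel}$ from the maximality statement in Proposition \ref{Mackey topology results}(i), and, for the converse, a direct argument that the closed unit ball $B_X$ must be a $\tau$-neighborhood of $0_X$ --- otherwise one picks $x_n\in U_n\setminus nB_X$ from a countable decreasing base, obtaining a $\tau$-null (hence $\tau$-bounded, hence $\tau_F$-bounded) sequence which Proposition \ref{Mackey topology results}(iii) would make norm-bounded, a contradiction --- after which $\tau_{\parallel\cdot\parallel}\subseteq\tau$ and Theorem \ref{construction}(b) give $\tau_{\parallel\cdot\parallel}\subseteq\tau_F$. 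You instead verify condition (b) of Theorem \ref{metrizability of the flcs}: since $X_{fin}^\rho$ is an open subspace for each $\rho\in S(X,\tau)$, every functional vanishing on it is $\tau$-continuous, hence norm-continuous by the dual-equality hypothesis, so every linear functional on a complement $M$ of $X_{fin}^\rho$ is $\parallel\cdot\parallel$-bounded, and Proposition \ref{discontinuous linear functional on infinite dimensional metrizable lcs} forces $\dim M<\infty$; metrizability of $(X,\tau_F)$ follows, and Proposition \ref{Mackey topology results}(ii), applied to both $\tau_F$ and $\tau_{\parallel\cdot\parallel}$, yields $\tau_F=\tau_M=\tau_{\parallel\cdot\parallel}$ with no inclusion proved separately. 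Each of your steps checks out: the openness of $X_{fin}^\rho$, the discreteness of the quotient, the extension-by-zero trick, and both citations are applied within their hypotheses (countability is exactly what Theorem \ref{metrizability of the flcs} requires). As for what each approach buys: yours avoids the elcs notion of bounded set and part (iii) of Proposition \ref{Mackey topology results} entirely, confines the use of countability to Theorem \ref{metrizability of the flcs}, and delivers a structural dividend along the way --- under these hypotheses $\dim\left(X/X_{fin}^\rho\right)<\infty$ for every continuous extended seminorm $\rho$; the paper's argument is more self-contained, needing neither Theorem \ref{metrizability of the flcs} nor Proposition \ref{discontinuous linear functional on infinite dimensional metrizable lcs}, and exhibits concretely that the norm ball is already a neighborhood of $0_X$ in the original extended topology $\tau$.
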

 
 \begin{proof}
 	Suppose $\parallel\cdot\parallel$ is a norm on $X$ with the given property. Then by Corollary \ref{finest and original topology have same dual}, $(X,\parallel\cdot\parallel)^* =( X,\tau_F)^*$. By (i) and (ii) of Proposition \ref{Mackey topology results}, both $(X,\parallel\cdot\parallel)$ and $(X,\tau_F)$ have the same Mackey topology $\tau_M$, and the norm topology $\tau_{\parallel\cdot\parallel}$ coincides with $\tau_M$. Hence $\tau_F\subseteq \tau_{\parallel\cdot\parallel}$. 
 	
 	For reverse inclusion, let  $\{U_n:n\in\mathbb{N}\}$ be a countable neighborhood base of $0_X$ in $\left( X,\tau\right)$ such that $U_{n+1}\subseteq U_n$ for each $n\in\mathbb{N}$. We show that the norm topology is coarser than $\tau$. Let $B_X$ be the closed unit ball in $\left( X,\parallel\cdot\parallel\right)$. If $B_X$  is not a neighborhood of $0_X$ in $(X,\tau)$, then  $nB_{X}$ is also not a neighborhood of $0_X$ in $(X,\tau)$ for any $n\in\mathbb{N}$. So  there exists $x_n\in U_n$ such that $x_n\notin nB_{X}$ for every $n\in\mathbb{N}$. It is easy to see that $x_n\to 0$ in $\left( X,\tau\right)$. Hence $\{x_n:n\in\mathbb{N}\}$ is bounded in $\left( X,\tau\right)$. Consequently, $\{x_n:n\in\mathbb{N}\}$ is bounded in $\left( X,\tau_F\right)$. By Proposition \ref{Mackey topology results}(iii), $\{x_n:n\in\mathbb{N}\}$ is bounded in $\left( X,\parallel\cdot\parallel\right)$. We arrive at a  contradiction as $\{x_n:n\in\mathbb{N}\}\nsubseteq nB_X$ for any $n\in\mathbb{N}.$ Thus by Theorem \ref{construction},  $\tau_{\parallel\cdot\parallel}\subseteq \tau_F$.   \end{proof}

 \begin{definition}
 	A locally convex space $(X,\tau)$ is said to be  barreled if every closed, absolutely convex, and absorbing subset $U$ of $X$ is a neighborhood of $0_X$ in $(X,\tau).$  
 \end{definition}
 
 \begin{theorem}\label{barrelledness}
 	Suppose $(X,\parallel\cdot\parallel)$ is an extended Banach space. Then the corresponding finest locally convex space $(X,\tau_{F})$ is Barreled.
 \end{theorem}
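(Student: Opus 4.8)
The plan is to reduce the statement to the classical fact that a Banach space is barreled, by passing to the open finite subspace $X_{fin}$, and then to transport the resulting neighborhood back to all of $X$ using the Minkowski functional together with Theorem \ref{finest and original topology have same seminorm}. So let $U\subseteq X$ be a barrel in $(X,\tau_F)$, that is, a $\tau_F$-closed, absolutely convex, absorbing set; the goal is to show that $U$ is a neighborhood of $0_X$ in $(X,\tau_F)$.

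First I would observe that since $\tau_F\subseteq \tau$, every $\tau_F$-closed set is $\tau$-closed, so $U$ is closed in $(X,\tau)$. Restricting to $X_{fin}$, the set $U\cap X_{fin}$ is then closed in $(X_{fin},\tau|_{X_{fin}})$, and it is clearly absolutely convex and absorbing in $X_{fin}$ (absorbency is inherited because $X_{fin}$ is a subspace). Since the restriction of the extended norm topology to $X_{fin}$ is exactly the norm topology of $(X_{fin},\parallel\cdot\parallel)$, the set $U\cap X_{fin}$ is a barrel in the normed space $(X_{fin},\parallel\cdot\parallel)$. Because $(X,\parallel\cdot\parallel)$ is an extended Banach space, Proposition 3.11 of \cite{nwiv} guarantees that $(X_{fin},\parallel\cdot\parallel)$ is a genuine Banach space, hence barreled. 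Therefore $U\cap X_{fin}$ is a norm-neighborhood of $0$ in $X_{fin}$, i.e.\ there exists $\varepsilon>0$ with $\{x\in X_{fin}:\parallel x\parallel\le\varepsilon\}\subseteq U$.

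Next I would lift this back to $X$. Since $U$ is absorbing, its Minkowski functional $\rho_U$ is an honest (finite-valued) seminorm on $X$. The inclusion above gives $\rho_U\le \tfrac{1}{\varepsilon}\parallel\cdot\parallel$ on $X_{fin}$, and on $X\setminus X_{fin}$ the extended norm equals $\infty$, so the basic $\tau$-neighborhood $\{x:\parallel x\parallel<\varepsilon\delta\}$ is contained in $\{x:\rho_U(x)<\delta\}$ for every $\delta>0$; thus $\rho_U$ is continuous with respect to $\tau$. By Theorem \ref{finest and original topology have same seminorm}, $\rho_U$ is then continuous with respect to $\tau_F$, so $\{x\in X:\rho_U(x)<1\}$ is a $\tau_F$-neighborhood of $0_X$. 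Since fact $(4)$ on Minkowski functionals gives $\{x:\rho_U(x)<1\}\subseteq U$, the barrel $U$ is a neighborhood of $0_X$ in $(X,\tau_F)$, and $(X,\tau_F)$ is barreled.

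The main obstacle, and the place where the extended-Banach hypothesis really enters, is the passage to $X_{fin}$: one must be sure that $\tau$-closedness of $U$ descends to norm-closedness of $U\cap X_{fin}$ and that $X_{fin}$ is Banach so that classical barreledness applies. A secondary subtlety worth flagging is that $U\cap X_{fin}$ need not be $\tau$-bounded, so $\rho_U$ is in general only a seminorm and need not be a finitely compatible norm; fortunately the argument only uses the one-sided domination $\rho_U\le\tfrac{1}{\varepsilon}\parallel\cdot\parallel$ on $X_{fin}$, which is exactly the inequality characterizing $\tau$-continuity of a finite seminorm on an extended normed space.
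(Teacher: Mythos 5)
Your proof is correct, and it reaches the paper's conclusion by a genuinely different reduction. The two arguments share the same endgame: once one knows that $U$ is a neighborhood of $0_X$ in $(X,\parallel\cdot\parallel)$, the Minkowski functional $\rho_U$ is a $\tau$-continuous seminorm, Theorem \ref{finest and original topology have same seminorm} makes it $\tau_F$-continuous, and the inclusion $\{x\in X:\rho_U(x)<1\}\subseteq U$ finishes. The difference lies in how that neighborhood is produced. The paper runs the Baire category argument directly on the extended space: $X=\bigcup_{n}nU$ with each $nU$ closed in $(X,\parallel\cdot\parallel)$, so some $n_0U$ has nonempty interior, and absolute convexity recenters the resulting ball at $0_X$. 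You instead restrict to the finite subspace: $U\cap X_{fin}$ is a barrel in $(X_{fin},\parallel\cdot\parallel)$, which is a Banach space by Proposition 3.11 of \cite{nwiv}, hence barreled by the classical theorem, and you then lift the resulting ball back to $X$ through the domination $\rho_U\leq\frac{1}{\varepsilon}\parallel\cdot\parallel$, valid on all of $X$ because the extended norm is infinite off $X_{fin}$. What your route buys: it quotes classical barreledness rather than reproving it via Baire, and it sidesteps a point the paper leaves implicit, namely that the Baire category theorem applies to complete \emph{extended} metric spaces (true, e.g.\ because $X$ is a disjoint union of clopen, completely metrizable cosets of $X_{fin}$, but worth a remark). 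What it costs: the extra lifting step, which the paper avoids by obtaining a norm ball around $0_X$ in $X$ in one stroke; your lifting is nonetheless airtight, and the inequality you end with is precisely the continuity criterion of Lemma \ref{condition for a cotinuous linear functional}.
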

 
 \begin{proof}
 	Let $U$ be an absolutely convex, absorbing, and closed set in $(X,\tau_{F})$. Then $nU$ is closed in $(X, \parallel\cdot \parallel)$ for each $n\in \mathbb{N}$, and  $X=\bigcup_{n\in\mathbb{N}}nU$. Since $(X, \parallel\cdot \parallel)$ is complete, by Baire Category theorem, the interior of $n_0U$ in $(X,\parallel\cdot\parallel)$ is nonempty for some $n_0 \in \mathbb{N}$. So there exist $x\in n_0U$ and $r>0$ such that the open ball $B(x,r)\subseteq n_0U$. As $U$ is absolutely convex, we have $B\left(0_X, \frac{r}{2n_0}\right)=\frac{-x}{2n_0}+B\left(\frac{x}{2n_0}, \frac{r}{2n_0}\right) \subseteq U$. Hence $U$ is a neighborhood of $0_X$ in $(X,\parallel\cdot \parallel)$. Consequently, the Minkowski functional $\rho_U$  is a continuous seminorm on $(X, \parallel\cdot \parallel)$. By Theorem \ref{finest and original topology have same seminorm}, $\rho_U$ is a continuous seminorm on $(X,\tau_{F})$. Since $\rho_U^{-1}\left( [0,1)\right)\subseteq U$, $U$ is a neighborhood of $0_X$ in $(X,\tau_{F})$.\end{proof}
 
 \begin{corollary}\label{mackeyness of the flcs}
 	Suppose $\left( X,\parallel \cdot\parallel\right)$ is an extended Banach space. Then  $(X,\tau_{F})$ is a Mackey space.
 \end{corollary}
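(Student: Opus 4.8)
The plan is to invoke the classical principle that every barreled locally convex space is a Mackey space, and to carry it out explicitly through the description of the Mackey topology given in Definition \ref{Mackey topology}. By Theorem \ref{barrelledness}, the space $(X,\tau_F)$ is barreled. Writing $\tau_M$ for the Mackey topology of $(X,\tau_F)$, Proposition \ref{Mackey topology results}(i) gives at once that $\tau_F\subseteq \tau_M$, since $\tau_F$ is a locally convex topology preserving its own continuous linear functionals and $\tau_M$ is the largest such topology. Thus it remains only to establish the reverse inclusion $\tau_M\subseteq \tau_F$.

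For this I would show that every member of the defining family $\mathcal{B}_M$ of $\tau_M$ (Definition \ref{Mackey topology}) is a $\tau_F$-neighborhood of $0_X$. Fix a weak$^*$ compact set $A\subseteq X^*$ with $(A_\circ)^\circ=A$, and consider $A_\circ=\{x\in X: |f(x)|\le 1\ \forall f\in A\}$. First, $A_\circ$ is absolutely convex, being an intersection of the absolutely convex sets $\{x:|f(x)|\le 1\}$. Second, by Corollary \ref{finest and original topology have same dual} each $f\in A$ belongs to $(X,\tau_F)^*$, so each set $\{x:|f(x)|\le 1\}$ is $\tau_F$-closed, and hence so is $A_\circ$. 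Third, $A_\circ$ is absorbing: since $A$ is weak$^*$ compact it is weak$^*$ bounded, whence $\sup_{f\in A}|f(x)|<\infty$ for each $x\in X$, and therefore $x\in\lambda A_\circ$ for a suitable $\lambda>0$. Consequently $A_\circ$ is a closed, absolutely convex, absorbing subset of $(X,\tau_F)$, that is, a barrel.

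Since $(X,\tau_F)$ is barreled, each such $A_\circ$ is a $\tau_F$-neighborhood of $0_X$. As the sets $A_\circ$ form a neighborhood base for $\tau_M$, this yields $\tau_M\subseteq \tau_F$; combining with the inclusion $\tau_F\subseteq \tau_M$ already obtained, we conclude $\tau_F=\tau_M$, so $(X,\tau_F)$ is a Mackey space. I expect the only delicate point to be the absorbing property of $A_\circ$: it relies on passing from weak$^*$ compactness to weak$^*$ boundedness and then to the finiteness of $\sup_{f\in A}|f(x)|$ for each fixed $x$, which is precisely what upgrades $A_\circ$ from a closed absolutely convex set to a genuine barrel and thereby lets barreledness do the work.
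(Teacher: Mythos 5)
Your proposal is correct and follows essentially the same route as the paper: the paper's proof simply combines Theorem \ref{barrelledness} with the classical fact that every barreled space is a Mackey space (cited from the literature), which is exactly your strategy. The only difference is that you unpack that cited fact into an explicit verification that each polar $A_\circ$ in the defining family of the Mackey topology is a barrel in $(X,\tau_F)$, and this verification is carried out correctly.
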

 \begin{proof}	It follows as every barreled space is a Mackey space (Corollary 4.9, \cite{lcsosborne}).  \end{proof}

\bibliographystyle{plain}
\bibliography{reference_file}

\end{document}